\newtheorem{theorem}{Theorem}[section]
\newtheorem{lemma}[theorem]{Lemma}
\newtheorem*{property-P}{Property P}
\theoremstyle{definition}
\newtheorem{definition}[theorem]{Definition}
\newtheorem{remark}[theorem]{Remark}
\renewcommand{\to}{\longrightarrow}
\newcommand{\coker}{\ensuremath{\mathsf{coker\,}}}
\renewcommand{\ker}{\ensuremath{\mathsf{ker\,}}}
\newcommand{\Fc}{\ensuremath{\mathcal{F}}}
\newcommand{\Bc}{\ensuremath{\mathcal{B}}}
\newcommand{\Ac}{\ensuremath{\mathcal{A}}}
\newcommand{\Ab}{\ensuremath{\mathsf{Ab}}}
\newcommand{\Xc}{\ensuremath{\mathcal{X}}}
\newcommand{\Yc}{\ensuremath{\mathcal{Y}}}
\newcommand{\Qc}{\ensuremath{\mathcal{Q}}}
\newcommand{\Sc}{\ensuremath{\mathcal{S}}}
\newcommand{\Set}{\ensuremath{\mathsf{Set}}}
\newcommand{\Loop}{\ensuremath{\mathsf{Loop}}}
\newcommand{\Gp}{\ensuremath{\mathsf{Grp}}}
\newcommand{\Ext}{\ensuremath{\mathsf{Ext}}}
\newcommand{\CExt}{\ensuremath{\mathsf{CExt}}}
\newcommand{\Ec}{\ensuremath{\mathcal{E}}}
\newcommand{\ttildef}{\ensuremath{\tilde f}}
\newbox\pullbackbox
\def\pullback{\copy\pullbackbox}
\newbox\pushoutbox
\def\pushout{\copy\pushoutbox}
\begin{document}

\title{A description of the fundamental group in terms of commutators and closure operators}

\author{Mathieu Duckerts-Antoine}
\address{Institut de Recherche en Math\'ematique et Physique, Universit\'e catholique de Louvain, Chemin du Cyclotron 2, 1348 Louvain-la-Neuve, Belgium}
\email{mathieu.duckerts@uclouvain.be}

\author{Tomas Everaert}
\address{Vakgroep Wiskunde, Vrije Universiteit Brussel, Pleinlaan 2, 1050 Brussel, Belgium}
\email{teveraer@vub.ac.be}

\author{Marino Gran}
\address{Institut de Recherche en Math\'ematique et Physique, Universit\'e catholique de Louvain, Chemin du Cyclotron 2, 1348 Louvain-la-Neuve, Belgium}
\email{marino.gran@uclouvain.be}

\begin{abstract}
A connection between the Galois-theoretic approach to semi-abelian homology and the homological closure operators is established. In particular, a generalised Hopf formula for homology is obtained, allowing the choice of a new kind of
 functors as coefficients. This makes it possible to calculate the fundamental groups corresponding to many interesting reflections arising, for instance, in the categories of groups, rings, compact groups and simplicial loops. \\
Keywords:  Galois theory, fundamental group, homology, closure operator, semi-abelian category, reflective subcategory 
\end{abstract}

\maketitle



\section{Introduction}
The main purpose of this article consists in establishing a new connection between the study of generalised Hopf formulae for semi-abelian homology \cite{EVD, EGV, GV, Ev} and the so-called homological closure operators which arise in the realm of homological categories \cite{BG}. This work provides a way of calculating the homology in interesting new contexts, even beyond the case where the coefficients are taken in a reflector $I \colon \Ac \rightarrow \Bc$ to a Birkhoff subcategory $\Bc$ of a semi-abelian category $\Ac$ \cite{JMT}.

The main concepts needed for this purpose are the one of \emph{abstract fundamental group} in the sense of categorical Galois theory \cite{J4}, and the one of \emph{protoadditive functor} \cite{EG}, which extends the classical notion of additive functor to a non-abelian setting.

Before introducing the general context we shall consider in this article, let us first recall some known facts concerning the relationship between the fundamental group and the second integral homology group ${H}_2(B, \mathbb Z)$ of a group $B$.
 By the well known Hopf formula \cite{H}, the group ${H}_2(B, \mathbb Z)$ can be calculated, from any free presentation 
\begin{equation}\label{exact}
\xymatrix{
0\ar[r]&K\ar@{{|>-}->}[r]  & P\ar@{{-|>}}[r] &B\ar[r]&0
}  
\end{equation}
of $B$,
 as the quotient group 
\begin{equation}\label{HopfFormula}
{H}_2(B, \mathbb Z) \cong \frac{ K \wedge [P,P]}{[K,P]},
\end{equation}
where $[\cdot, \cdot ]$ is the group commutator. From a categorical perspective \cite{J4}, this formula can be revisited as follows. Given a surjective homomorphism $p \colon E \rightarrow B$ in the category $\mathsf{Grp}$ of groups, the abelianisation functor $\mathsf{ab} \colon \mathsf{Grp} \rightarrow \mathsf{Ab}$ sends the kernel equivalence relation of $p$, pictured as
$$\xymatrix{
(E \times_B E) \times_E (E \times_B E) \ar[r]^-{\tau} & E\times_B E \ar@<1ex>[r]^-{\pi_1} \ar@<-1ex>[r]_-{\pi_2} \ar@(ul,ur)[]^-{\sigma} & E,\ar@{.>}[l]|-{\delta}\\
}$$
(with $\delta$ the arrow giving the reflexivity, $\sigma$ the symmetry and $\tau$ the transitivity) to an internal groupoid $$\xymatrix{
\mathsf{ab}((E \times_B E) \times_E (E \times_B E)) \ar[r]^-{\mathsf{ab} (\tau)}& \mathsf{ab}(E\times_B E) \ar@<1ex>[rr]^-{\mathsf{ab}(\pi_1)} \ar@<-1ex>[rr]_-{\mathsf{ab}(\pi_2)} \ar@(ul,ur)[]^-{\mathsf{ab}(\sigma)} & &\mathsf{ab}(E)\ar@{.>}[ll]|-{\mathsf{ab}(\delta)}\\
}$$
in the category $\mathsf{Ab}$ of abelian groups, which is called the \emph{Galois groupoid} of $p$, written $\mathsf{Gal}(E,p)$ \cite{J, BoJ}. The group of automorphisms of $0$ of $\mathsf{Gal}(E,p)$ is, by definition, the \emph{Galois group} of $p$: this object is defined categorically, as the domain of the kernel of the induced arrow $\langle \mathsf{ab} (\pi_1), \mathsf{ab} (\pi_2) \rangle \colon  \mathsf{ab}(E\times_B E) \rightarrow \mathsf{ab}(E) \times \mathsf{ab}{(E)}$. 

Now, if we begin with a free presentation of $B$ as in (\ref{exact}), we obtain a  \emph{weakly universal central extension} by considering the quotient $q$ giving its centralisation $\overline{p}$:
$$
\xymatrix{
P \ar[dr]_p  \ar[rr]^q &  & \frac{P}{[K,P]} \ar@{.>}[dl]^{\overline{p}} \\
& {B.}& }  
$$
The Galois group of the weakly universal central extension $\overline{p} \colon  \frac{P}{[K,P]} \rightarrow B$ turns out to be an \emph{invariant of $B$}, called the \emph{fundamental group $\pi_1(B)$ of $B$}, which is independent of the chosen weakly universal central extension of $B$, and is isomorphic to the quotient on the right hand side of (\ref{HopfFormula}):
\begin{equation}\label{GaloisHopf}
 \pi_1 (B) \cong \frac{ K \wedge [P,P]}{[K,P]}.
 \end{equation}

The fundamental group $\pi_1 (B)$ of an object $B$ can be defined and studied in many different situations, essentially when there is a ``good adjunction'' that induces an admissible Galois structure in the sense of \cite{J} (see Section \ref{GaloisSection}). For instance, the Poincar\'e fundamental group of homotopy classes of loops at a fixed base point is another special instance of this general notion of fundamental group, corresponding to a reflector of rather different nature: the connected component functor $\pi_0 \colon \mathsf{LoCo} \rightarrow \mathsf{Set}$ from the category of locally connected topological spaces to the category of sets. In this case, universal coverings play the role that weakly universal central extensions play above (see Chapter $6$ in \cite{BoJ}). This example motivates the use of the term fundamental group, and of the symbol $\pi_1(B)$, for the Galois group of an object $B$ in a more general context.

The isomorphisms (\ref{HopfFormula}) and (\ref{GaloisHopf}) can be extended to the context of a semi-abelian category $\Ac$ with enough regular projectives \cite{EVD, EGHHA, J4}.
 Here the coefficients are taken in any reflector $I \colon \Ac \rightarrow \Bc$ from
$\Ac$ to any Birkhoff subcategory $\Bc$ of $\Ac$ (i.e. $\Bc$ is a  full regular-epi reflective subcategory stable under regular quotients in $\Ac$), on the model of the reflector $\mathsf{ab} \colon \mathsf{Grp} \rightarrow \mathsf{Ab}$, and the commutator $[\cdot , \cdot ]$ is replaced by a ``relative'' commutator $[\cdot , \cdot ]_{\Bc}$. Once again, the fundamental group $\pi_1(B)$ of $B$ can be defined as the (internal) group of automorphisms of the (internal) Galois groupoid of \emph{any} weakly universal central extension of $B$. As in the case of the category of abelian groups, an object in $\Bc$ carries at most one group structure, so that the Galois group is uniquely determined by its underlying object. For instance, when $B$ is an object in a semi-abelian category $\Ac$ which is monadic over sets we obtain an isomorphism between the fundamental group $\pi_1(B)$ and the homology object $H_2 (B, I)$ of $B$ with coefficients in the reflector $I \colon \Ac \rightarrow \Bc$ (in the sense of Barr and Beck \cite{BarrBeck}). 

The main point of this article is to further extend the isomorphism (\ref{GaloisHopf}) to a more general situation, where the coefficient functor is not necessarily a reflector to a Birkhoff subcategory. More precisely, we examine the following composite adjunction
 \begin{equation}\label{Composite-adjunction1}
 \xymatrix@=30pt{
\Fc \ar@/_/[r]_-U   \ar@{}[r]|-{\perp} & \Bc \ar@/_/[r]_-H \ar@{}[r]|-{\perp} \ar@/ _/[l]_-F & \Ac \ar@/ _/[l]_-I}
\end{equation}
where $\Ac$ is a semi-abelian category, $\Bc$ a Birkhoff subcategory of $\Ac$, and $\Fc$ an admissible (in the sense of categorical Galois theory) regular epi-reflective subcategory of $\Bc$ with the property that the reflector $F \colon \Bc \rightarrow \Fc$ is protoadditive. This means that $F$ preserves split short exact sequences:
if 
$$\xymatrix{
0\ar[r]&K\ar@{{|>-}->}[r]^-{k}  & A\ar@{{-|>}}[r]_-f &B \ar@<-1ex>[l]_s \ar[r]&0
}$$
is a split short exact sequence, then its image 
$$\xymatrix{
0\ar[r]&F(K)\ar@{{|>-}->}[r]^-{F(k)}  & F(A)\ar@{{-|>}}[r]_-{F(f)} &F(B) \ar@<-1ex>[l]_{F(s)} \ar[r]&0
}$$
in $\Bc$ by $F$ is again a split short exact sequence. Of course, a functor between additive categories is additive if and only if it is protoadditive, but there are many further interesting examples of protoadditive functors between homological categories \cite{EG, Infinite}. 

Our main result is Theorem \ref{Formule}: it states that, given a projective presentation as in (\ref{exact}), the fundamental group of $B$ relative to the adjunction (\ref{Composite-adjunction1}) is given by
$$
\pi_1(B)\cong \dfrac{K `^ \overline{([P,P]_{\Bc})}^{\Fc}_P }{\overline{([K,P]_{\Bc})}^{\Fc}_{K}},
$$
where the closure is the \emph{homological closure} corresponding to the regular-epi reflector $F \circ I \colon \Ac \rightarrow \Fc$ (see Section \ref{closure.operator} for more details on this closure operator). There is a wide range of reflections for which it is possible to ``compute'' the fundamental group on any given object using this formula, which includes, in particular, the known one in the Birkhoff case, when $\Bc= \Fc$. Various examples are examined in detail in the last section, such as the reflector  $\Gp \rightarrow \Ab_{t.f.}$ from the category of groups to the category of torsion-free abelian groups and the reflector $\mathsf{Grp} (\mathsf{HComp}) \rightarrow \mathsf{Ab} (\mathsf{Prof})$ from the category of compact Hausdorff groups to the category of profinite abelian groups.


 \section{Homological and semi-abelian categories}\label{homological}

 
 We assume the reader to be familiar with the notions of homological and of semi-abelian categories \cite{JMT}: we briefly recall some definitions and properties below, and we refer to the book \cite{BB} for more details.
 
Recall that a category $\Ac$ is \emph{homological} when it is finitely complete, regular, pointed (with zero object $0$) and protomodular \cite{Bourn0}: in the presence of the other assumptions this last property amounts to the validity of the Split Short Five Lemma in $\Ac$.
It is well known that in a homological category $\Ac$ any regular epimorphism is a \emph{normal} epimorphism, thus the cokernel of its kernel \cite{B2}. We write 
$$\xymatrix{
0\ar[r]&K\ar@{{|>-}->}[r]^-{k}  & A\ar@{{-|>}}[r]^-f &B\ar[r]&0
}$$
for a \emph{short exact sequence}, by which we mean that $k =\ker (f)$ and $f = \coker (k)$. We shall also write $\xymatrix{K \ar@{{|>-}->}[r] & A}$ for a normal monomorphism and  $\xymatrix{A\ar@{{-|>}}[r] & B}$ for a normal epimorphism.
The following result is well known (see \cite{B2}); we shall often use it in the paper.
 \begin{lemma}\label{homologique}
Let $\Ac$ be a homological category and consider the following commutative diagram:
\begin{equation}\label{2-by-3}
\xymatrix{
0\ar[r]&K\ar@{{|>-}->}[r]^-{k}\ar[d]_-u \ar@{}[rd]|-{(1)}& A\ar@{{-|>}}[r]^-f\ar[d]_-v \ar@{}[rd]|-{(2)}&B\ar[r]\ar[d]^-w&0\\
0\ar[r]&K'\ar@{{|>-}->}[r]_-{k'}& A'\ar@{{-|>}}[r]_-{f'}& B'\ar[r]&0
}
\end{equation}
where both rows are exact. Then

\begin{itemize}
\item $w$ is a monomorphism if and only if $(1)$ is a pullback.\\
 \item $u$ is an isomorphism if and only if $(2)$ is a pullback.\\
\end{itemize}
\end{lemma}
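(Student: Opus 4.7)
The strategy is to treat the two equivalences separately, using three ingredients from the homological setting: in a regular category, regular epimorphisms are stable under pullback; in a pointed regular category, the kernel of a morphism is its pullback along the zero arrow; and the Short Five Lemma holds in any homological category, essentially by protomodularity.

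For the first equivalence, suppose $(1)$ is a pullback and $wx = 0$ for some $x \colon T \to B$. Pulling back the regular epimorphism $f$ along $x$ yields a regular epi $p \colon T' \to T$ and $q \colon T' \to A$ with $fq = xp$; then $f'vq = wfq = 0$, so $vq$ factors through $k'$, and the pullback property of $(1)$ lifts $q$ to some $s \colon T' \to K$ with $ks = q$. Hence $xp = fks = 0$, which forces $x = 0$ because $p$ is epic. Conversely, assume $w$ is mono and consider $a \colon T \to K'$, $b \colon T \to A$ with $vb = k'a$. Then $wfb = f'vb = f'k'a = 0$ gives $fb = 0$, so $b$ factors uniquely through $k$ by some $b' \colon T \to K$; the monicity of $k'$ then implies $ub' = a$, showing $(1)$ is a pullback.

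For the second equivalence, the backward direction runs as follows. If $(2)$ is a pullback, then $u$ is mono because $(v, f)$ is jointly monic and $k$ is mono; moreover $u$ is a split epimorphism, since for any $\alpha \colon T \to K'$ the pair $(k'\alpha, 0)$ satisfies $f'k'\alpha = 0 = w \cdot 0$, so the pullback property of $(2)$ produces $\gamma \colon T \to A$ with $v\gamma = k'\alpha$ and $f\gamma = 0$, forcing $\gamma = k\delta$ for some $\delta \colon T \to K$, whence $k'u\delta = vk\delta = k'\alpha$ and thus $u\delta = \alpha$. For the forward direction, form the pullback $P := A' \times_{B'} B$ with projections $\pi_1, \pi_2$; since $f'$ is a regular epimorphism, so is $\pi_2$, and its kernel is $(k', 0) \colon K' \hookrightarrow P$, yielding a short exact sequence $0 \to K' \to P \to B \to 0$. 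The induced map $\phi := \langle v, f\rangle \colon A \to P$ fits into a morphism of short exact sequences with vertical components $u$, $\phi$, and $\mathrm{id}_B$; since $u$ and $\mathrm{id}_B$ are isomorphisms, the Short Five Lemma delivers $\phi$ iso, which is exactly the claim that $(2)$ is a pullback.

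The main technical point is the forward direction of the second equivalence: one needs pullback-stability of regular epimorphisms to produce a short exact bottom row so that the Short Five Lemma is applicable. The other three directions amount to diagram chases with kernels and regular epimorphisms and present no real obstacle.
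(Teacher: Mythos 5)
Your proof is correct; the paper itself gives no argument for this lemma (it simply cites Bourn's \emph{$3\times 3$ lemma and protomodularity}), and what you write is essentially the standard proof one finds there: diagram chases with kernels for three of the four implications, and the comparison map $\langle v,f\rangle\colon A\to A'\times_{B'}B$ together with the Short Five Lemma for ``$u$ iso $\Rightarrow$ $(2)$ pullback''. The only step worth making explicit is at the end of ``$(1)$ pullback $\Rightarrow$ $w$ mono'': your chase shows that every $x$ with $wx=0$ is zero, i.e.\ that $K[w]=0$, and passing from a trivial kernel to an actual monomorphism is exactly where pointed protomodularity is used (it fails, e.g., in pointed sets), so you should name that ingredient alongside the three you list.
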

 
 A homological category $\Ac$ is a \emph{semi-abelian category} when it is Barr-exact \cite{Barr} (any internal equivalence relation is effective) and has binary coproducts.
An additional property of a semi-abelian category is that the regular image of a normal monomorphism is again normal: given a commutative square
$$\xymatrix{
K \ar@{{-|>}}[r]^-p \ar@{{|>-}->}[d]_-k & f(K) \ar@{{ >}->}[d]^-m \\
A \ar@{{-|>}}[r]_-f & B
}$$
where $p$ and $f$ are regular epimorphisms, $k$ is a normal monomorphism and $m$ a monomorphism, then $m$ is necessarily normal \cite{JMT}. 

It is well known that any full regular-epi reflective subcategory of a homological category is itself homological, whereas a Birkhoff subcategory of a semi-abelian category is semi-abelian \cite{BG1}.


 \section{Galois structures and normal extensions}\label{GaloisSection}

 
 We now recall the definitions of a Galois structure and of a normal extension. We shall restrict ourselves to the case of reflective subcategories \cite{JK,JK4}.
 \begin{definition}\label{Structure-Galois} \cite{J} A {\it Galois structure} is a system $\Gamma =(\Ac,\Bc,I,H,\eta,\epsilon,\Ec)$, where:\\
 \begin{enumerate}
 \item 
 $\Bc$ is a full replete reflective subcategory of $\Ac$ with inclusion functor $H$ and left adjoint $I$, unit $\eta$ and counit $\epsilon$ (which is an isomorphism, of course)
 $$\xymatrix@=30pt{
{\Bc \, } \ar@/_/[r]_-{H} \ar@{}[r]|-{\perp} & {\, \Ac \, },
\ar@/_/[l]_-{I} }$$

\item $\Ec$ is a class of morphisms in $\Ac$, such that:\\
\begin{enumerate}
\item $HI(\Ec) \subseteq \Ec$;
\item $\Ac$ has all pullbacks along morphisms in $\Ec$;
\item $\Ec$ is closed under composition, contains all isomorphisms, and is pullback-stable along morphisms in $\Ac$.
\end{enumerate}
\end{enumerate}
\end{definition}
 
 Such a Galois structure induces an adjunction $$(I^B,H^B,\eta^B,\epsilon^B): \Ac\downarrow B \rightharpoonup \Bc\downarrow I(B)$$ for every $B$ in $\Ac$,  where\\
 \begin{enumerate}
 \item $\Ac\downarrow B$ is the full subcategory of the slice category $\Ac/B$ whose objects, the \emph{extensions of $B$}, are the arrows in $\Ec$ with codomain $B$, and $\Bc\downarrow I(B)$ is the full subcategory of $\Bc/I(B)$ whose objects are in $\Ec$; we write $(A,f)$ or $(A, f \colon A \rightarrow B)$ to denote an extension of $B$;
 \item $I^B(A,f:A\rightarrow B)=(I(A),I(f):I(A)\rightarrow I(B))$; 
 \item $H^B(X,\phi:X \rightarrow I(B))$ is defined as the first projection $\pi_1$ of the following pullback:
 $$\xymatrix{
    B{\times}_{HI(B)}H(X)  \ar[rr]^{\pi_2} \ar[d]_{\pi_1} \ar@{}[rd]|<<{\pullback} & & H(X) \ar[d]^{H(\phi)} \\
    B \ar[rr]_{\eta_B}& & HI(B);
  }$$
 \item $(\eta^B)_{(A,f)}=\langle f,\eta_A \rangle:A\rightarrow B {\times}_{HI(B)}HI(A)$;
 \item $(\epsilon^B)_{(X,\phi)}= \epsilon_X \circ I(\pi_2):I(B{\times}_{HI(B)}H(X)) \rightarrow IH(X)\rightarrow X$.
 \end{enumerate} 
 
 \begin{definition}\label{Structure-Admissible} A Galois structure is {\it admissible} when, for any object $B$ in $\Ac$, $\epsilon^B$ is an isomorphism.
 \end{definition}
 Of course, the admissibility of the Galois structure amounts to the fully faithfulness of the functor $H^B \colon \Bc\downarrow I(B) \rightarrow  \Ac\downarrow B$, for every $B\in\Ac$.
 
 As shown by Janelidze and Kelly, when $\Ec$ is the class of regular epimorphisms, any Birkhoff subcategory of an exact category is admissible (= determines an admissible Galois structure), provided the lattice of congruences on any object in $\Ac$ is modular \cite{JK}. This is the case, in particular, for any Birkhoff subcategory of a semi-abelian category \cite{BG1}. From now on, we shall assume that $\Gamma$ denotes an admissible Galois structure. 
 
Given a Galois structure $\Gamma$, the purpose of categorical Galois theory is then to describe and classify the morphisms in $\Ec$ which are $\Gamma$-\emph{coverings}, a notion that we are going to recall below.
 
\begin{definition}\label{Extension-Mon} A morphism $p:E \rightarrow B$ in $\Ec$ is a {\it  monadic extension} if the pullback functor $p^*:\Ac\downarrow B \rightarrow \Ac\downarrow E$ is monadic.
 \end{definition}
 \begin{remark}
 For the main results in this article, we shall always assume that $\Ec$ is the class of all regular epimorphisms in a Barr-exact category. It is well known that, in this context, a regular epimorphism is always a monadic extension. 
\end{remark}
\begin{definition}\label{Trivial-extension} A morphism $f \colon E \rightarrow B$ in $\Ec$ is said to be a {\it  $\Gamma$-trivial extension}, or a {\it $\Gamma$-trivial covering}, if the following commutative square is a pullback:
$$\xymatrix{
E \ar[r]^-{\eta_E}\ar[d]_-f&HI(E)\ar[d]^-{HI(f)}\\
B\ar[r]_-{\eta_B} &HI(B)
}$$
\end{definition}
Notice that a morphism  $f \colon E \rightarrow B$ in $\Ec$ is a trivial extension precisely when it lies in the replete image of the fully faithful functor $H^B \colon \Bc\downarrow I(B) \rightarrow  \Ac\downarrow B$.
\begin{definition}\label{Central-extension} A morphism $f$ in $\Ec$ is a {\it  $\Gamma$-central extension}, or a {\it $\Gamma$-covering}, if there is  a monadic extension $g$ such that $g^*(f)$ is a trivial extension.
\end{definition}

\begin{definition}\label{Normal-extension} A monadic extension $f$ is a {\it  $\Gamma$-normal extension} if $f^*(f)$ is a trivial extension.
\end{definition}
We shall sometimes speak of $\Bc$-trivial, $\Bc$-central and $\Bc$-normal extensions if the Galois structure $\Gamma$ is understood.

Recall from \cite{JK} that if $\Bc$ is a Birkhoff subcategory of an exact Goursat category (for instance, of a semi-abelian category) $\Ac$, and $\Ec$ is the class of regular epimorphisms in $\Ac$, then the $\Bc$-central and $\Bc$-normal extensions coincide. The same will be true for the Galois structures considered below (see Theorem \ref{Caracterisation}). Note also that when $\Ac$ is the variety of groups and $\Bc$ the subvariety of abelian groups, then a normal or central extension of groups is just a central extension in the usual sense: a surjective homomorphism $f\colon A\to B$ whose kernel lies in the center of $A$ (see, for instance \cite{BoJ}).



\section{The fundamental group}\label{Pi1}


Let $\Gamma=(\Ac,\Bc,I,H,\eta,\epsilon, \Ec)$ be an admissible Galois structure such that $\Ac$ is a homological category. With any normal extension $(E,p:E \rightarrow B)$ we can associate an internal groupoid  $\mathsf{Gal}(E,p)$, called its {\it Galois groupoid}, and an internal group $\mathsf{Gal} (E,p,0)$, its {\it Galois group}. If the diagram
$$\xymatrix{
(E \times_B E) \times_E (E \times_B E) \ar[r]^-{\tau} & E\times_B E \ar@<1ex>[r]^-{\pi_1} \ar@<-1ex>[r]_-{\pi_2} \ar@(ul,ur)[]^-{\sigma} & E\ar@{.>}[l]|-{\delta}\\
}$$
represents the kernel equivalence relation of $p$ (viewed as an internal groupoid in $\Ac$), then:
\begin{enumerate}
\item the Galois groupoid $\mathsf{Gal}(E,p)$ is the image under $I$ of the kernel equivalence relation of $p$:
$$\xymatrix{
I((E \times_B E) \times_E (E \times_B E)) \ar[r]^-{I(\tau)}& I(E\times_B E) \ar@<1ex>[rr]^-{I(\pi_1)} \ar@<-1ex>[rr]_-{I(\pi_2)} \ar@(ul,ur)[]^-{I(\sigma)} & &I(E)\ar@{.>}[ll]|-{I(\delta)}\\
}.$$
One can prove that there is an isomorphism $$I(E\times_B E)\times_{I(E)} I(E\times_B E) \cong I((E \times_B E) \times_E (E \times_B E))$$ and that $\mathsf{Gal}(E,p)$ defined this way is an internal groupoid in $\Bc$ (see \cite{BoJ}).\\
\item The Galois group is defined as the object $\mathsf{Gal} (E,p,0)$ in the following pullback:
$$\xymatrix@=40pt{
\mathsf{Gal} (E,p,0) 
\ar@{}[rd]|<<<{\pullback} \ar[r] \ar@{{ >}->}[d]&0\ar@{{ >}->}[d]\\
I(E\times_B E)\ar[r]_-{\langle  I(\pi_1),I(\pi_2) \rangle}& I(E)\times I(E).
}$$
\end{enumerate}
The Galois group $\mathsf{Gal} (E,p,0)$  can be viewed, internally, as the group of automorphisms of $0$. Since $\Bc$ is a protomodular category, any of its objects underlies at most one internal group structure \cite{BB}. As explained in \cite{J4}, the Galois group $\mathsf{Gal} (E,p,0)$ ``measures'' the lack of preservation of the pullback $E\times_B E$ by the functor $I$. 

Recall that a normal extension $(E, p\colon E\to B)$ is called \emph{weakly universal} if, given any other normal extension $(E', p'\colon E'\to B)$, there is a morphism $u \colon E \rightarrow E'$ such that $p=p' \circ u$. In the article \cite{J4} the author proved that the Galois group $\mathsf{Gal} (E,p,0)$ of a weakly universal normal extension $p\colon E\to B$ is an invariant of $B$, denoted by $\pi^{\Gamma}_1(B)$, or just $\pi_1(B)$, the \emph{fundamental group} of $B$. Furthermore, in Theorem $2.1$ it was shown that $\pi_1(B)$ is isomorphic to the following intersection
\begin{equation}\label{Hopf} 
\pi_1(B) \cong K[p] `^ K[\eta_E].
\end{equation}
where $(E,p\colon E\to B)$ is any weakly universal normal extension of $B$, and $K[p] `^  K[\eta_E]$ denotes the domain of the intersection of the kernels $\ker(p)$ and $\ker(\eta_E)$ of $p$ and $\eta_E$, respectively.



\section{Homological closure operators}\label{closure.operator}

 
In \cite{BG} the notion of \emph{homological closure operator} was introduced and a bijective correspondence was established between such closure operators and regular epi-reflective subcategories of a given homological category. 

\begin{definition}
A homological closure operator on normal subobjects associates, with any normal subobject $k \colon K \rightarrow  A$ of $A$ in a homological category $\Ac$, another normal subobject $\overline{k}\colon \overline{K}_A \rightarrow A $, the closure of $K$ in $A$. This assignment has to satisfy the following properties, where $K \rightarrow A$ and  $L \rightarrow A $ are normal subobjects, and $f \colon B \rightarrow A$ is an arrow in $\Ac$:
\begin{enumerate}
\item $K \subseteq \overline{K}_A$,
\item $K \subseteq L$ implies $\overline{K}_A \subseteq \overline{L}_A$,
\item $\overline{(f^{-1} (K))}_B \subseteq  f^{-1} ( \overline{K}_A)$,
\item $\overline{\overline{K}}_A = \overline{K}_A$,
\item for any regular epimorphism $g \colon B \rightarrow A$ we have $\overline{(g^{-1} (K))}_B =  g^{-1} ( \overline{K}_A)$.
\end{enumerate}
\end{definition}
The bijection between regular epi-reflective subcategories of $\Ac$ and homological closure operators in $\Ac$ is established as follows. Given a homological closure operator, the corresponding regular epi-reflective subcategory $\Bc$ of $\Ac$ is its full replete subcategory whose objects $B$ have the property that $0 \rightarrow B$  is closed, and the reflection of an object $A \in \Ac$ into $\Bc$ is given by the quotient $A/{\overline 0_A}$ of $A$ by the closure ${\overline 0_A}$ of $0$ in $A$.  Conversely, given a regular epi-reflective subcategory $\Bc$ of $\Ac$, the closure $\overline{k}:\overline{K}_A^{\Bc}  \rightarrow A$ of a normal subobject $k:K \rightarrow A$ is given by the pullback 
$$\xymatrix{
{\overline K}_A^{\Bc} \ar@{}[rd]|<{\pullback} \ar@{{|>-}->}[d]_-{\overline k} \ar@{{-|>}}[r] & K[\eta_{A/K}] \ar@{{|>-}->}[d]^{\ker(\eta_{A/K})} \\
A \ar@{{-|>}}[r]_-{q_K} & A/K
}$$
where $q_K \colon A \rightarrow A/K$ is the canonical quotient and $\ker(\eta_{A/K})$ is the kernel of the unit of the reflection $\eta_{A/K}$ of $A/K$ into $\Bc$. 

%
Recall that, in a homological category $\Ac$, two normal subobjects $K\to A$ and $L\to A$ admit a supremum (in the poset of normal subobjects of $A$) as soon as the following pushout exists
 \[
 \xymatrix{
 A 
\ar@{{-|>}}[r] \ar@{{-|>}}[d] & A/K \ar@{{-|>}}[d]_>>{\pushout}
\\
 A/L \ar@{{-|>}}[r] & P,}
 \]
and that this supremum can be obtained as the kernel of the ``diagonal" of this square, the arrow $A\to P\cong A/(K\vee L)$.

Proposition $3.3$ in \cite{BG} gives the explicit formula $\overline{K}_A^{\Bc} =K `V {\overline 0}^{\Bc}_A$ to compute the closure $\overline{K}_A^{\Bc}$ of a normal subobject $K \rightarrow A$ in a semi-abelian category, whenever $\Bc$ is a Birkhoff subcategory of $\Ac$. 
Below we refine this observation, in the more general context of homological categories with pushouts of regular epimorphisms (we write $K\lhd A$ to indicate that $K$ is a normal subobject of $A$):
\begin{lemma}\label{Fermeture}
Let $\Bc$ be a regular epi-reflective subcategory of a homological category $\Ac$ such that the supremum of two normal subobjects always exists. The following properties hold:
\begin{enumerate} 
\item $\forall A \in \Ac$, $\forall K \lhd A$ one has ${\overline K}^{\Bc}_A= \overline {(K `V {\overline 0}^{\Bc}_A)}^{\Bc}_A$;
\item $\Bc$ is a Birkhoff subcategory if and only if $\forall A\in\Ac$, $\forall K \lhd A$ one has $$K `V {\overline 0}^{\Bc}_A = {\overline K}^{\Bc}_A;$$
\item if $\Bc$ is a Birkhoff subcategory, then  $\forall A\in\Ac$, $\forall K,L \lhd A$ one has 
\[
\overline{ (K `V L)}^{\Bc}_A=\overline{K}^{\Bc}_A `V \overline{L}^{\Bc}_A.
\]
\end{enumerate}
\end{lemma}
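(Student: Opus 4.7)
My plan is to handle the three parts in sequence: (1) is purely formal, (3) follows from (2) by a manipulation of joins, so the real work is concentrated in (2). A convenient preliminary fact I would isolate at the outset, directly from the definition $\overline M^{\Bc}_A = q_M^{-1}(K[\eta_{A/M}])$, is the following: a normal subobject $M \lhd A$ is \emph{closed} (i.e.\ $\overline M^{\Bc}_A = M$) if and only if $A/M \in \Bc$. Indeed, since $q_M$ is a regular epimorphism, $q_M^{-1}$ is injective on subobjects of $A/M$, so $\overline M^{\Bc}_A = M = q_M^{-1}(0)$ is equivalent to $K[\eta_{A/M}] = 0$; because $\eta_{A/M}$ is automatically a regular epimorphism ($\Bc$ being regular-epi reflective), this is in turn equivalent to $\eta_{A/M}$ being an isomorphism, that is, to $A/M \in \Bc$.

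For (1), both inclusions follow formally from the closure axioms. From $K \subseteq K \vee {\overline 0}^{\Bc}_A$ monotonicity gives $\overline K^{\Bc}_A \subseteq \overline{(K \vee {\overline 0}^{\Bc}_A)}^{\Bc}_A$. Conversely, extensivity yields $K \subseteq \overline K^{\Bc}_A$, while monotonicity applied to $0 \subseteq K$ gives ${\overline 0}^{\Bc}_A \subseteq \overline K^{\Bc}_A$, so $K \vee {\overline 0}^{\Bc}_A \subseteq \overline K^{\Bc}_A$; applying monotonicity once more and invoking idempotency gives $\overline{(K \vee {\overline 0}^{\Bc}_A)}^{\Bc}_A \subseteq \overline K^{\Bc}_A$.

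For the forward direction of (2), assuming $\Bc$ is Birkhoff, I would show that $K \vee {\overline 0}^{\Bc}_A$ is already closed; together with (1) this produces the claimed formula. By the preliminary observation, it suffices to check that $A/(K \vee {\overline 0}^{\Bc}_A) \in \Bc$. Since ${\overline 0}^{\Bc}_A \subseteq K \vee {\overline 0}^{\Bc}_A$, the canonical quotient $A \twoheadrightarrow A/(K \vee {\overline 0}^{\Bc}_A)$ factors through $\eta_A \colon A \twoheadrightarrow I(A)$, and the induced factor $I(A) \twoheadrightarrow A/(K \vee {\overline 0}^{\Bc}_A)$ is a regular epimorphism (here I would invoke the standard regular-category fact that if a composite $g \circ f$ is a regular epimorphism with $f$ a regular epimorphism, so is $g$). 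Since $I(A) \in \Bc$ and $\Bc$ is stable under regular quotients, the codomain lies in $\Bc$, as required. For the converse, suppose the formula holds, take $A \in \Bc$ (so ${\overline 0}^{\Bc}_A = 0$) and a regular epimorphism $q \colon A \twoheadrightarrow B$ with kernel $K$; then $\overline K^{\Bc}_A = K \vee 0 = K$, so $K$ is closed, and the preliminary observation gives $B \cong A/K \in \Bc$, proving $\Bc$ is closed under regular quotients.

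Finally, (3) is immediate from (2) by associativity and idempotency of joins:
\[
\overline K^{\Bc}_A \vee \overline L^{\Bc}_A = (K \vee {\overline 0}^{\Bc}_A) \vee (L \vee {\overline 0}^{\Bc}_A) = (K \vee L) \vee {\overline 0}^{\Bc}_A = \overline{(K \vee L)}^{\Bc}_A.
\]
The only genuinely delicate point in this plan is in the forward direction of (2), namely producing the regular epimorphism $I(A) \twoheadrightarrow A/(K \vee {\overline 0}^{\Bc}_A)$ — which is needed to bring the Birkhoff hypothesis to bear. Everything else amounts to a direct unwinding of the definitions of the homological closure operator and of a Birkhoff subcategory.
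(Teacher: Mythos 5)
Your proposal is correct and follows essentially the same route as the paper: part (1) by the closure axioms, part (2) by reducing to the observation that a normal subobject $M\lhd A$ is closed iff $A/M\in\Bc$ and then using that $A/(K\vee\overline{0}^{\Bc}_A)$ is a regular quotient of $I(A)$ for the forward direction and specialising to $A\in\Bc$ for the converse, and part (3) by the join computation. The only difference is that you spell out some steps (the injectivity of $q_M^{-1}$ on subobjects, the factorisation through $\eta_A$) that the paper leaves implicit.
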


\begin{proof}
$(1)$ On the one hand we have $K \leq K `V {\overline 0}^{\Bc}_A$, which implies $\overline{K}^{\Bc}_A \leq \overline {(K `V {\overline 0}^{\Bc}_A)}^{\Bc}_A$.  On the other hand, since $K `V {\overline 0}^{\Bc}_A \leq \overline{K}^{\Bc}_A$, we find that $$\overline {(K `V {\overline 0}^{\Bc}_A)}^{\Bc}_A \leq \overline{(\overline{K}^{\Bc}_A)}^{\Bc}_A = \overline{K}^{\Bc}_A.$$

$(2)$ By the previous property it suffices to prove that $\Bc$ is a Birkhoff subcategory of $\Ac$ if and only if $K `V {\overline 0}^{\Bc}_A$ is closed in $A$. Let us first of all remark that for any short exact sequence 
\[
\xymatrix{ 0 \ar[r] & K \ar[r] & A \ar[r] & B \ar[r] & 0}
\]
in $\Ac$ we have that $K$ is closed in $A$ if and only $B$ lies in $\Bc$. Now, if $\Bc$ is a Birkhoff subcategory of $\Ac$ with reflector $I \colon \Ac \rightarrow \Bc$, then $K `V \overline{0}^{\Bc}_A$ is closed in $A$ since the quotient $A/K `V \overline{0}^{\Bc}_A$ lies in $\Bc$ as a regular quotient of $A/\overline{0}^{\Bc}_A=I(A)\in\Bc$.  Conversely, if we assume that $K `V \overline{0}^{\Bc}_A$ is closed in $A$, for any short exact sequence as above, then if $A$ lies in $\Bc$, so that $\overline{0}^{\Bc}_A=0$, we have that $K$ is closed in $A$, hence $B\in\Bc$.

$(3)$ If $\Bc$ is a Birkhoff subcategory of $\Ac$ one has the following equalities:
$$\begin{array}{rcl}
 \overline{ (K `V L)}^{\Bc}_A &
 \stackrel{(2)}{=}& (K `V L) `V {\overline 0}^{\Bc}_A\\
 &=& (K `V {\overline 0}^{\Bc}_A) `V (L `V {\overline 0}^{\Bc}_A)\\
 &\stackrel{(2)}{=}&\overline{K}^{\Bc}_A `V \overline{L}^{\Bc}_A.
\end{array}$$
\end{proof} 


\section{A composite adjunction}\label{Section-principale}


From now on we shall consider the following adjunctions
 \begin{equation}\label{Composite-adjunction}
 \xymatrix@=30pt{
\Fc \ar@/_/[r]_-U   \ar@{}[r]|-{\perp} & \Bc \ar@/_/[r]_-H \ar@{}[r]|-{\perp} \ar@/ _/[l]_-F & \Ac \ar@/ _/[l]_-I}
\end{equation}
where $\Ac$ is a semi-abelian category, $\Bc$ a Birkhoff subcategory of $\Ac$, and $\Fc$ an admissible (for the class of regular epimorphisms in $\Bc$) regular epi-reflective subcategory of $\Bc$ with the property that the reflector $F \colon \Bc \rightarrow \Fc$ is \emph{protoadditive} \cite{EG}: this means that $F$ preserves split short exact sequences.\\
The functor $F\circ I$ is left adjoint to $H \circ U$. We shall write $\eta$ and $\epsilon$ for the unit and the counit of this composite adjunction, and $\Ec$ for the class of regular epimorphisms in $\Ac$.
The unit and the counit of the adjunction $I\dashv H$ will be denoted by $\eta^1$ and $\epsilon^1$, whereas the unit and the counit of the adjunction $F\dashv U$ will be denoted  $\eta^2$ and $\epsilon^2$, respectively.  

\begin{remark}
The protoadditivity of the reflector $F$ is independent of the admissibility. For instance, the reflection $\Gp \to \Ab$ of the variety $\Gp$ of groups into the subvariety $\Ab$ of abelian groups is admissible, but not protoadditive. The reflection  $\Ab \to \Qc$ of the variety $\Ab$ into the quasi-variety $\Qc$ of abelian groups satisfying the implication ($4x = 0 \Rightarrow 2x=0$) is (proto)additive but it is not admissible (see \cite{Infinite}).  
\end{remark}

\begin{lemma}\label{Lemme-Admissible} $(\Ac, \Fc, F\circ I, H \circ U, \eta, \epsilon ,\Ec)$ is an admissible Galois structure.
\end{lemma}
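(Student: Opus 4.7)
The plan is to verify the structural axioms of Definition~\ref{Structure-Galois} and then to establish admissibility. The structural axioms reduce to straightforward composition of the two reflections, while admissibility will be obtained by combining the admissibility of the two component adjunctions: $I \dashv H$, which holds because $\Bc$ is a Birkhoff subcategory of the semi-abelian category $\Ac$ (by Janelidze--Kelly), and $F \dashv U$, which is part of the hypotheses.

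For the structural part, $\Fc$ is a full replete reflective subcategory of $\Ac$ by composition of adjunctions, with inclusion $H \circ U$ and left adjoint $F \circ I$. To see that $(HU)(FI)(\Ec) \subseteq \Ec$, for $f \in \Ec$ one checks successively that $I(f)$ is a regular epi in $\Bc$ (since $\Bc$ is Birkhoff), that $FI(f)$ is a regular epi in $\Fc$ (left adjoints preserve regular epis), that $UFI(f)$ is a regular epi in $\Bc$ (using the naturality identity $\eta^{2}_{I(B)} \circ I(f) = UFI(f) \circ \eta^{2}_{I(A)}$: the left-hand side and $\eta^{2}_{I(A)}$ are regular epis, so cancellation in a regular category gives the claim), and finally that $HUFI(f)$ is a regular epi in $\Ac$, because regular epis in $\Bc$ between objects of $\Bc$ are regular epis in $\Ac$ (a consequence of the Birkhoff hypothesis applied to the regular epi--mono factorisation in $\Ac$). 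The remaining axioms for $\Ec$ (closure under composition, containment of isomorphisms, pullback-stability) and the existence of pullbacks along morphisms in $\Ec$ are standard in a semi-abelian category.

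The key step is admissibility. Fix $B \in \Ac$ and a regular epi $\phi \colon X \to FI(B)$ with $X \in \Fc$; since $\Bc$ is closed under regular quotients in $\Ac$, $\phi$ is also a regular epi in $\Bc$. The plan is to decompose the unit $\eta_B = HU(\eta^{2}_{I(B)}) \circ \eta^{1}_B$ and present the pullback defining $H^B(X,\phi)$ as an iterated pullback $B \times_{HI(B)} H(Q)$, where $Q = I(B) \times_{UFI(B)} U(X)$ is the pullback in $\Bc$ defining $H^{I(B)}(X,\phi)$ for the adjunction $F \dashv U$ (the fact that $H$ preserves pullbacks as a right adjoint is used here). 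The projection $\pi_2$ of the outer pullback factors as $H(\pi_{2,Q})$ composed with the left projection. Admissibility of $F \dashv U$ applied to $I(B)$ yields that $F(\pi_{2,Q})$ is an isomorphism; admissibility of $I \dashv H$ applied to $B$ yields that $I$ sends the left projection to an isomorphism, once one notes that $\pi_{1,Q} \colon Q \to I(B)$ is a regular epi (as the pullback of the regular epi $\eta^{2}_{I(B)}$). Combining these facts with $I \circ H \cong \mathrm{id}$ and $F \circ U \cong \mathrm{id}$, one obtains that $FI(\pi_2)$ is an isomorphism, whence $\epsilon^{B}_{(X,\phi)} = \epsilon_X \circ FI(\pi_2)$ is an isomorphism, as required.

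The main difficulty I anticipate is bookkeeping rather than anything genuinely deep: one must carefully track the three categories $\Ac$, $\Bc$, $\Fc$ and the associated classes of regular epimorphisms, and verify that every morphism to which the admissibility of a component adjunction is applied actually lies in the appropriate class of extensions. These verifications are local and rely only on the Birkhoff hypothesis on $\Bc$ and the pullback-stability of regular epimorphisms; note also that protoadditivity of $F$ plays no role here, consistent with the remark preceding the statement.
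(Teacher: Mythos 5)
Your proof is correct and follows essentially the same route as the paper: everything rests on decomposing the induced slice adjunction as $(H\circ U)^{B}=H^{B}\circ U^{I(B)}$ and combining the admissibility of the two component adjunctions, which is exactly the paper's argument (the paper phrases it as fully-faithfulness of a composite of fully faithful functors, while you verify the counit formula explicitly via the iterated pullback). One parenthetical slip: $\pi_{1,Q}\colon Q\to I(B)$ is a regular epimorphism because it is the pullback of the regular epimorphism $U(\phi)$ along $\eta^{2}_{I(B)}$ (the pullback of $\eta^{2}_{I(B)}$ is the other projection $\pi_{2,Q}$), but the conclusion you need still holds.
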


\begin{proof}
Clearly, $(\Ac, \Fc, F\circ I, H \circ U, \eta, \epsilon ,\Ec)$ is a Galois structure. To see that it is admissible, note that, for any $B$ in $\Ac$, the functor $(H\circ U)^{B}\colon \Fc\downarrow FI(B)\to \Ac\downarrow B$ can be decomposed into 
\[
(H\circ U)^{B}=H^{B}\circ U^{I(B)}\colon  \Fc\downarrow FI(B) \to \Bc\downarrow I(B) \to \Ac\downarrow B,
\]
so that each $(H\circ U)^{B}$ is fully faithful as a composite of fully faithful functors.
\end{proof}

\begin{theorem}\label{Caracterisation} Let $f: A \rightarrow B$ be a regular epimorphism in $\Ac$. The following conditions are equivalent:
\begin{enumerate}
 \item $f$ is an $\Fc$-normal extension;
 \item $f$ is an $\Fc$-central extension;
 \item $f$ is $\Bc$-central and $K[f] \in \Fc$.
\end{enumerate}
\end{theorem}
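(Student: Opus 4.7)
The plan is to establish the three implications $(1)\Rightarrow(2)$, $(3)\Rightarrow(1)$ and $(2)\Rightarrow(3)$ individually. The implication $(1)\Rightarrow(2)$ is immediate: an $\Fc$-normal extension is trivialised by itself and hence is $\Fc$-central.

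For $(3)\Rightarrow(1)$ the essential tool will be the protoadditivity of $F$. Assuming $f$ is $\Bc$-central with $K[f]\in\Fc$, since $\Bc$ is Birkhoff in the semi-abelian $\Ac$, $\Bc$-central coincides with $\Bc$-normal \cite{JK}, so the kernel pair projection $\pi_1\colon A\times_B A\to A$ of $f$ is $\Bc$-trivial, which by Lemma \ref{homologique} gives $K[f]\cong K[I(\pi_1)]$ in $\Bc$. Applying $I$ to the split short exact sequence
\[
\xymatrix{0\ar[r] & K[f] \ar@{{|>-}->}[r] & A\times_B A \ar@{{-|>}}[r]^-{\pi_1} & A \ar@<-1ex>[l]_-{\Delta} \ar[r] & 0 }
\]
(split by the diagonal $\Delta$) therefore yields a split short exact sequence in $\Bc$ with kernel $K[f]\in\Fc$. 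Protoadditivity of $F$ preserves this, producing the split short exact sequence $0\to K[f]\to FI(A\times_B A)\to FI(A)\to 0$ in $\Fc$, with middle map $FI(\pi_1)$ and the same kernel. By Lemma \ref{homologique} applied to the naturality square of $\eta^2$ at $I(\pi_1)$, this square is a pullback in $\Bc$. Applying the limit-preserving $H$ and pasting, via the factorisation $\eta=H\eta^2\circ\eta^1$ of the composite unit, with the $\Bc$-triviality pullback of $\pi_1$ then yields the pullback square witnessing the $\Fc$-triviality of $\pi_1=f^*(f)$, so $f$ is $\Fc$-normal.

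For $(2)\Rightarrow(3)$, let $g\colon E\to B$ be a monadic extension making $f'=g^*(f)$ $\Fc$-trivial. Taking vertical kernels in the $\Fc$-triviality pullback gives $K[f]\cong K[f']\cong HU(K[FI(f')])\in\Fc$. For the $\Bc$-centrality of $f$ the plan is to show that $f'$ itself is $\Bc$-trivial. The $\Fc$-triviality pullback $E\times_B A\cong E\times_{HUFI(E)}HUFI(E\times_B A)$ decomposes, along $\eta=H\eta^2\circ\eta^1$, into two stages: the inner pullback $HI(E)\times_{HUFI(E)}HUFI(E\times_B A)$ is of morphisms in $H(\Bc)$ and hence, by the limit-preservation of $H$, equals $H(Y')$ with $Y'=I(E)\times_{UFI(E)}UFI(E\times_B A)$ computed in $\Bc$; the outer pullback $E\times_{HI(E)}H(Y')$ is a pullback along the unit $\eta^1_E$ whose other leg lies in $H(\Bc)$, and is thus preserved by $I$ by admissibility of the Birkhoff reflection \cite{JK}. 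Hence $I(E\times_B A)\cong Y'=I(E)\times_{UFI(E)}UFI(E\times_B A)$, meaning the naturality square of $\eta^2$ at $I(f')$ is a pullback in $\Bc$; i.e.\ $I(f')$ is $\Fc$-trivial in $\Bc$. Applying $H$ and pasting, via $\eta=H\eta^2\circ\eta^1$, with the $\Fc$-triviality pullback of $f'$ then gives that $f'$ is $\Bc$-trivial. The most delicate part will be the pasting argument in $(2)\Rightarrow(3)$, where one must correctly decompose the $\Fc$-triviality pullback along $\eta=H\eta^2\circ\eta^1$ and invoke admissibility of $I$ to transfer $\Fc$-triviality from $\Ac$ to $\Bc$; the analogous conversion in $(3)\Rightarrow(1)$ is more transparent, thanks to the split-exactness of the kernel pair sequence and the direct applicability of protoadditivity.
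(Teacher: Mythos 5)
Your proof is correct. The implications $(1)\Rightarrow(2)$ and $(3)\Rightarrow(1)$ follow the paper's own argument essentially step for step: for the latter, both you and the authors use that $\Bc$-central coincides with $\Bc$-normal so that $I$ sends the split short exact sequence of the kernel pair to a split short exact sequence in $\Bc$ with kernel $K[f]$, then apply protoadditivity of $F$, Lemma \ref{homologique} and pullback pasting along $\eta=H\eta^{2}\circ\eta^{1}$. Where you genuinely diverge is the $\Bc$-centrality part of $(2)\Rightarrow(3)$. The paper notes that the $\eta^{1}$-naturality square at $f'=g^{*}(f)$ is a double extension (a pushout of regular epimorphisms, because $\Bc$ is Birkhoff) whose composite with the $\eta^{2}$-square is a pullback, and invokes Lemma $1.1$ of \cite{Gran} --- a Mal'tsev-category statement --- to conclude that the $\eta^{1}$-square is itself a pullback. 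You instead factor the $\Fc$-triviality pullback along $\eta=H\eta^{2}\circ\eta^{1}$, use that $H$ preserves the inner pullback $Y'$, and then use admissibility of $I\colon\Ac\to\Bc$ (the counit $\epsilon^{E}$ being an isomorphism) to identify $I(E\times_{B}A)$ with $Y'$, so that $I(f')$ is $\Fc$-trivial in $\Bc$ and $\Bc$-triviality of $f'$ follows by pullback cancellation. Your route is purely formal and proves the more general fact that, for a composite of admissible reflections, an extension trivial for the composite is trivial for the first reflection and is sent by $I$ to a trivial extension for the second; it does not need the Mal'tsev lemma nor, at this point, the full Birkhoff hypothesis beyond admissibility of $I$. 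The paper's route is shorter given the cited lemma and displays exactly where the Birkhoff (quotient-closure) condition is used. One detail you should make explicit: admissibility of $I$ applies to $\phi\colon Y'\to I(E)$ only if $\phi$ belongs to $\Ec$; this holds because $\phi$ is a pullback, in the regular category $\Bc$, of the regular epimorphism $UFI(f')$.
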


\begin{proof}
$(1)\Rightarrow (2)$ Obvious.\\
$(2) \Rightarrow (3)$ 
Let us assume that $f$ is a $\Fc$-central extension: there exists a monadic extension $p:E \rightarrow B$ such that $p^*(f)$ is $\Fc$-trivial. Then in the following commutative diagram the composite of the left pointing squares is a pullback: 
\[
\xymatrix{
FI(E\times_BA) \ar[d]_{FI(p^*(f))} & I(E\times_BA) \ar[d]_{I(p^*(f))} \ar[l] & E\times_B A \ar@{}[rd]|<<{\pullback}\ar[d]_{p^*(f)} \ar[l] \ar[r] & A \ar[d]^f\\
FI(E) & I(E) \ar[l] & E \ar[l] \ar[r]_p & B}
\]
Since the middle square is a double extension (= a pushout of regular epimorphisms, in our context) because $\Bc$ is a Birkhoff subcategory of $\Ac$, this implies that this square is, in fact, a pullback (see Lemma $1.1$ in \cite{Gran}), and we find that $f$ is a central extension with respect to $\Bc$. 

That $K[f]$ lies in $\Fc$ for any $\Fc$-central extension $f$ is a very general and well-known fact. It suffices to note that the above pullbacks induce the isomorphisms 
\[
K[FI(p^*(f))] \cong K[p^*(f)] \cong K[f],
\]
which imply that $K[f]\in\Fc$, as (the domain of) the kernel of a morphism in $\Fc$.

$(3) \Rightarrow (1)$ Suppose now that $f$ is $\Bc$-central and $K[f] \in \Fc$. Then one sees in the diagram 
$$
\xymatrix@=35pt{
K[f] \ar@{{|>-}->}[r]^-{\ker(\pi_1)}  \ar@{{-|>}}[d]_-{\eta^1_{K[f]}} &R[f] \ar@<1ex>@{{-|>}}[r]^-{\pi_1}  \ar@<-1ex>@{{-|>}}[r]_-{\pi_2}  \ar@{{-|>}}[d]_-{\eta^1_{R[f]}}  \ar@{}[rd]|-{(3)} &A  \ar@{{-|>}}[d]^-{\eta^1_A}  \ar@{.>}[l]\\
I(K[f]) \ar[r]^-{I(\ker(\pi_1))}  \ar@{{-|>}}[d]_{\eta^2_{I(K[f])}} &I(R[f]) \ar@<1ex>@{{-|>}}[r]^-{I(\pi_1)}  \ar@<-1ex>@{{-|>}}[r]_-{I(\pi_2)}  \ar@{{-|>}}[d]  \ar@{}[rd]|-{(4)} &I(A) \ar@{{-|>}}[d]  \ar@{.>}[l] \\
FI(K[f]) \ar[r]_-{FI(\ker(\pi_1))} &FI(R[f]) \ar@<1ex>@{{-|>}}[r]^-{FI(\pi_1)}   \ar@<-1ex>@{{-|>}}[r]_-{FI(\pi_2)}  &FI(A) \ar@{.>}[l]} 
$$
 that $(3)$ is a pullback (since $f$ is $\Bc$-normal if and only if $f$ is $\Bc$-central by Theorem $4.8$ in \cite{JK}) and $\eta^1_{K[f]}$ an isomorphism. It follows that the (upper) second row is a split short exact sequence. By protoadditivity of $F$, also the (upper) third row is a split short exact sequence, and we obtain from Lemma \ref{homologique} that $(4)$ is a pullback because $\eta^2_{I(K[f])}$ is an isomorphism, by assumption. Thus $(3)$+$(4)$ is a pullback and $f$ is an $\Fc$-normal extension.
\end{proof}

Any Birkhoff subcategory $\Bc$ of a semi-abelian category $\Ac$ induces the reflection
$$\xymatrix{
\CExt_{\Bc}(\Ac) \ar@/_/[r]_-{H_1} \ar@{}[r]|-{\perp} & \Ext(\Ac) \ar@/_/[l]_-{I_1}
} $$
where $\Ext(\Ac)$ is the category of extensions in $\Ac$, and $\CExt_{\Bc}(\Ac)$ its full replete subcategory determined by the $\Bc$-central extensions in $\Ac$ (see \cite{EGV}). We recall that this reflection is defined by 
\[
I_1(A,f:A\rightarrow B) = (\frac{A}{[K[f],A]_{\Bc}}, \check f : \frac{A}{[K[f],A]_{\Bc}} \rightarrow B)
\]
where $\check{f}$ is the factorisation induced by the quotient $A\rightarrow A/[K[f],A]_{\Bc}$, and the ``relative commutator" $[K[f],A]_{\Bc}$ is obtained as the kernel of the restriction $\hat{\pi}_1$ of the first projection $\pi_1$ of the kernel pair of $f$ to $\overline{0}_{R[f]}^{\Bc}\to\overline{0}_A^{\Bc}$, as in the following diagram:
\[
\xymatrix{
0 \ar[r] & [K[f],A]_{\Bc} \ar[d] \ar[r]^-{\ker (\hat{\pi}_1)} \ar@{}[rd]|<<{\pullback} & \overline{0}^{\Bc}_{R[f]} \ar[d]_{\ker( \eta^1_{R[f]})}  \ar@<0.7ex>[r]^-{\hat{\pi}_1}  \ar@<-0.7ex>[r]_-{\hat{\pi}_2} \ar[d] & \overline{0}^{\Bc}_A \ar[d]^{\ker(\eta^1_A)} \ar[r] & 0\\
0 \ar[r] & K[f] \ar[r]_{\ker(\pi_1)} & R[f] \ar@<0.7ex>[r]^-{\pi_1}  \ar@<-0.7ex>[r]_-{\pi_2} & A \ar[r] & 0.}
\]
$[K[f],A]_{\Bc}$ is a normal subobject of $A$ via the monomorphism $\ker(\eta^1_A)\circ \hat{\pi}_2\circ \ker(\hat{\pi}_1)$ which is normal since it is the regular image of the normal monomorphism $\ker(\eta^1_{R[f]}) \circ \ker(\hat{\pi}_1)$ along $\pi_2$ (see Section \ref{homological}). It turns out that an extension $(A, f \colon A \rightarrow B)$ belongs to $\CExt_{\Bc}(\Ac)$ if and only if $[K[f], A ]_{\Bc} = 0$. Furthermore, the commutator $[K[\cdot],\cdot]_{\Bc}$ is stable under regular images in the following sense: if in diagram \eqref{2-by-3} both $u$ and $v$ are regular epimorphisms, then so is the induced morphism $[K,A]_{\Bc}\to [K',A']_{\Bc}$ (see \cite{EVDGT}, where the notation ``$L_1$" was used for the relative commutator). We recall that $[A,A]_{\Bc}=\overline{0}_A^{\Bc}$ for any $A$ in $\Ac$, since the reflector $I$ preserves binary products (see Lemma $5.2$ in \cite{EGHHA}).

 Below we are going to show that there is also an adjunction
$$\xymatrix{
\CExt_{\Fc}(\Ac) \ar@/_/[r]_-{U_1} \ar@{}[r]|-{\perp} & \CExt_{\Bc}(\Ac) \ar@/_/[l]_-{F_1}
} $$
induced by the reflector $F \colon \Bc \rightarrow \Fc$. This will show that any composite reflection (\ref{Composite-adjunction}) induces a composite reflection at the level of the category of extensions:
$$\xymatrix{
\CExt_{\Fc}(\Ac) \ar@/_/[r]_-{H_1 \circ U_1} \ar@{}[r]|-{\perp} & {\Ext(\Ac).} \ar@/_/[l]_-{F_1 \circ I_1}
} $$
For this, the following result will be useful:
\begin{lemma}\label{TKerF} 
Let $f:A\rightarrow B$ be a morphism in $\CExt_{\Bc}(\Ac)$. One has that \\ $\ker (f) \circ \ker(\eta_{K[f]}) \colon \overline{0}_{K[f]}^{\Fc}\rightarrow K[f]  \rightarrow A$ is a normal monomorphism: $\overline{0}_{K[f]}^{\Fc} \lhd  A $.
\end{lemma}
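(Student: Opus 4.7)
The plan is to work inside the kernel pair $R[f]$ of $f$, where the split structure lets us invoke the protoadditivity of $F$, and then to transfer normality from $R[f]$ down to $A$ via the second projection $\pi_{2}\colon R[f]\to A$.

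First, since $f$ is $\Bc$-central, $K[f]$ lies in $\Bc$ and $f$ is $\Bc$-normal by Theorem \ref{Caracterisation}, so the naturality square of $\eta^{1}$ at $\pi_{1}$ is a pullback. Applying Lemma \ref{homologique} to this square (using that $\eta^{1}_{K[f]}$ is an isomorphism) produces a split short exact sequence
\[
0 \to K[f] \to I(R[f]) \xrightarrow{I(\pi_{1})} I(A) \to 0
\]
in $\Bc$, split by $I(\delta)$. Protoadditivity of $F$ then yields a split short exact sequence $0 \to F(K[f]) \to FI(R[f]) \xrightarrow{FI(\pi_{1})} FI(A) \to 0$ in $\Fc$; in particular $FI(\ker\pi_{1})\colon F(K[f]) \to FI(R[f])$ is a monomorphism. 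In the naturality square of the unit $\eta$ of $F\circ I$ along $\ker\pi_{1}$,
\[
\xymatrix{
K[f] \ar[d]_{\eta_{K[f]}} \ar[r]^{\ker\pi_{1}} & R[f] \ar[d]^{\eta_{R[f]}} \\
F(K[f]) \ar@{{ >}->}[r]_{FI(\ker\pi_{1})} & FI(R[f]),
}
\]
I would compute the kernel of the common composite $K[f] \to FI(R[f])$ along each path: the bottom path gives $\overline{0}^{\Fc}_{K[f]}$ (using monicity of the bottom arrow), while the top path gives $K[f]`^\overline{0}^{\Fc}_{R[f]}$ as a subobject of $R[f]$. Hence $\overline{0}^{\Fc}_{K[f]} = K[f]`^\overline{0}^{\Fc}_{R[f]}$ inside $R[f]$. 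Since the intersection of the normal subobjects $K[f]$ and $\overline{0}^{\Fc}_{R[f]}$ of $R[f]$ is itself normal (being the kernel of the pair $(\pi_{1},\eta_{R[f]})\colon R[f]\to A\times FI(R[f])$), we obtain $\overline{0}^{\Fc}_{K[f]}\lhd R[f]$. Finally, $\pi_{2}\circ\ker\pi_{1} = \ker f$ identifies the composite $\overline{0}^{\Fc}_{K[f]}\hookrightarrow R[f]\xrightarrow{\pi_{2}} A$ with $\ker(f)\circ\ker(\eta_{K[f]})$; this composite is monic, so its image factorization along the regular epimorphism $\pi_{2}$ is trivial, and the semi-abelian property recalled in Section \ref{homological} (regular images of normal monomorphisms are normal) gives the conclusion.

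The main obstacle is establishing monicity of $FI(\ker\pi_{1})$: protoadditivity of $F$ applies only to split sequences in $\Bc$, so one genuinely needs $\Bc$-centrality of $f$ to upgrade the right exact image $I(K[f])\to I(R[f])\to I(A)\to 0$ to a split short exact sequence in $\Bc$ before invoking protoadditivity. Without this monicity, the crucial identification $\overline{0}^{\Fc}_{K[f]} = K[f]`^\overline{0}^{\Fc}_{R[f]}$ breaks down, and the kernel-pair approach collapses.
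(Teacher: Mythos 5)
Your proof is correct and follows essentially the same route as the paper's: use $\Bc$-centrality and protoadditivity to identify $\overline{0}^{\Fc}_{K[f]}$ with the normal subobject $K[f]\wedge\overline{0}^{\Fc}_{R[f]}$ of $R[f]$, then push it forward along $\pi_{2}$ and invoke the fact that regular images of normal monomorphisms are normal. The only (cosmetic) difference is that you obtain the pullback identification by a kernel chase in the naturality square of $\eta$ along $\ker(\pi_{1})$, where the paper reads it off a $3\times 3$-style diagram of short exact sequences.
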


\begin{proof}
First note that since $f$ is a $\Bc$-central extension, its kernel $K[f]$ lies in $\Bc$ (see the proof of  $(2) \Rightarrow (3)$ in Theorem \ref{Caracterisation}). Then, as in the proof of $(3) \Rightarrow (1)$ in Theorem \ref{Caracterisation}, we see that (the upper part of) the last row in the following diagram is exact:
$$\xymatrix@=20pt{
&0 \ar[d]&&0\ar[d]&0\ar[d]\\
0\ar[r] &\overline{0}_{K[f]}^{\Fc} \ar@{}[rrd]|{(5)} \ar@{{|>-}->}[rr]^-{\widehat{\ker(\pi_1)}} \ar@{{|>-}->}[d]_{\ker(\eta_{K[f]})} && {\overline 0}^{\Fc}_{R[f]} \ar@<1ex>@{{-|>}}[r]^{\widehat{\pi}_1} \ar@<-1ex>@{{-|>}}[r]_{\widehat{\pi}_2} \ar@{{|>-}->}[d] & {\overline 0}^{\Fc}_{A} \ar@{{|>-}->}[d]^-{\ker(\eta_A)} \ar@{.>}[l] \ar[r]&0\\
0\ar[r]&K[f]  \ar[d]_{\eta_{K[f]}} \ar@{{|>-}->}[rr]_-{\ker(\pi_1)}&& R[f] \ar@<1ex>@{{-|>}}[r]^{\pi_1} \ar@<-1ex>@{{-|>}}[r]_{\pi_2}  \ar@{{-|>}}[d] & A \ar@{.>}[l]\ar@{{-|>}}[d] \ar[r]& 0\\
0\ar[r]&FI(K[f])\ar[d] \ar@{{|>-}->}[rr]_{FI(\ker(\pi_1))} && FI(R[f]) \ar[d]\ar@<1ex>@{{-|>}}[r]^{FI(\pi_1)} \ar@<-1ex>@{{-|>}}[r]_{FI(\pi_2)} & FI(A)\ar[d] \ar@{.>}[l] \ar[r] &0\\
&0&&0&0
}$$
It follows that the upper part of the first row in this diagram is also exact.
The commutative square $(5)$ is then a pullback, since $\ker(\eta_A)$ is a monomorphism, and the arrow $\alpha = \ker(\pi_1) \circ \ker(\eta_{K[f]})$ is a normal monomorphism. Since $\pi_2 \circ \alpha= \ker(f) \circ \ker(\eta_{K[f]})$ is a monomorphism, and $\pi_2(\overline{0}_{K[f]}^{\Fc})\cong \overline{0}_{K[f]}^{\Fc}$, we find that $\overline{0}_{K[f]}^{\Fc}$ is normal in $A$ (as a regular image of a normal monomorphism).
\end{proof}
  
\begin{remark}
Notice that, for any normal subobject $K\triangleleft A$ of an object $A$ of $\Bc$, its closure with respect to the reflection $(F,U,\eta^{2},\epsilon^2)$ coincides with its closure with respect to the composite reflection $(F \circ I, H \circ U, \eta, \epsilon)$, so that there is no ambiguity in the notation $\overline{K}^{\Fc}_A$.
\end{remark}

We are now in a position to describe the reflector $F_1  \colon \CExt_{\Bc}(\Ac)  \rightarrow \CExt_{\Fc}(\Ac) $. For this, consider $f \colon A \rightarrow B$ in $\CExt_{\Bc}(\Ac)$, and the following commutative diagram
$$\xymatrix@=20pt{
\overline{0}_{K[f]}^{\Fc}  \ar@{{|>-}->}[d]_-{\ker(\eta_{K[f]})} \ar@{{|>-}->}[drr]^-{\ker(h)}  \\
K[f]  \ar@{{-|>}}[d]_-{\eta_{K[f]}}  \ar@{{|>-}->}[rr]^-{\ker(f)}  && A \ar@{{-|>}}[d]_-h \ar@{{-|>}}[r]^-f  & B\ar@{=}[d] \\
F(K[f])  \ar@{.>}[rr]_-{\alpha}  & & \frac{A}{\overline{0}_{K[f]}^{\Fc}} \ar@{{-|>}}[r]_-{\ttildef}  & B
}$$
where $h$ is the cokernel of the normal monomorphism $\ker(f) \circ \ker(\eta_{K[f]})$ and $\alpha$ is the factorisation of $h \circ \ker(f)$ through $\eta_{K[f]}$.
By applying Lemma \ref{homologique} we see that $\alpha$ is a monomorphism. The arrow $\alpha \circ \eta_{K[f]}$ is then the (regular epimorphism, monomorphism)-factorisation of $h \circ \ker(f)$. From the uniqueness of this factorisation it easily follows that $\alpha = \ker(\tilde{f})$, and $K[\tilde{f}] = F(K[f]) \in \Fc$.
 
  We can  now show that $\ttildef$ is central with respect to $\Bc$, so that $\ttildef$ will be in $\CExt_{\Fc}(\Ac)$ (by Theorem \ref{Caracterisation}). Since $h$ and $\eta_{K[f]}$ are regular epimorphisms, the induced restriction $[K[f],A]_{\Bc} \rightarrow [K[\ttildef],A/\overline{0}_{K[f]}^{\Fc} ]_{\Bc}$ is a regular epimorphism as well. Hence, $[K[\ttildef],A/\overline{0}_{K[f]}^{\Fc} ]_{\Bc}$ is zero since, by assumption, so is $[K[f],A]_{\Bc}$.\\
 We define $F_1(f) = \ttildef$, and verify that $F_1(f)$ has the desired universal property. Let $f': A' \rightarrow B'$ be an extension in $\CExt_{\Fc}(\Ac)$ and $(a, b) \colon f \rightarrow f'$ be an arrow in $\CExt_{\Bc}(\Ac)$ making commutative the right-hand square in the following diagram:
$$ \xymatrix{
\overline{0}_{K[f]}^{\Fc} \ar[r]^{\ker(h)}  \ar[d]_{\hat{a}} &  A \ar[d]_{a} \ar[r]^f& B \ar[d]^{b}\\
\overline{0}_{K[f']}^{\Fc} \ar[r] & {A'} \ar[r]_{f'} & {B'.}
 }$$
 Since $K[f'] \in \Fc$, one has that $\overline{0}_{K[f']}^{\Fc}=0$; from the commutativity of the left-hand square it follows that $a \circ \ker(h) = 0$, and there is then a (necessarily unique) factorisation $c \colon \frac{A}{\overline{0}_{K[f]}^{\Fc}}  \rightarrow A'$ with $c \circ h = a$. The arrow $(c, b) \colon \tilde{f} \rightarrow f'$ is the desired factorisation in the category $\CExt_{\Bc}(\Ac)$.

 \begin{remark}
 As shown in \cite{BG}, any torsion theory $(\mathcal T, \mathcal F)$ in a homological category $\Ac$ determines a semi-left-exact \cite{CHK} reflector $F \colon \Ac \rightarrow \Fc$ to the torsion-free subcategory $\Fc$ of $\Ac$. This means in particular that the corresponding Galois structure is admissible (see \cite{GJ, GR}). An important class of examples of the composite adjunction considered in this section is given by any adjunction \eqref{Composite-adjunction} with $\Fc$ a torsion-free subcategory of $\Bc$ for $(\mathcal T, \mathcal F)$ a hereditary torsion theory and $\Bc$ Birkhoff in $\Ac$. Indeed, it is easy to check that, under our assumptions, the fact that the torsion subcategory $\mathcal T$ is closed in $\mathcal A$ under subobjects implies that the reflector $F \colon \Ac \rightarrow \Bc$ is a protoadditive functor.
\end{remark} 



\section{The generalised Hopf formula}\label{Section-formule}


Before proving the main result of this section---a generalised Hopf formula---we need a few lemmas. We begin by stating a technical result proved in \cite{J4} in a more general context.
\begin{lemma}\label{Cube-Monos} Let $\Ac$ be a homological category. Consider the following cube
$$\xymatrix@=15pt{
K `^ L \ar@{.>}[dd]  \ar@{{ >}->}[dr] \ar@{{ >}->}[rr] & & L\ar[dd] \ar@{{ >}->}[dr] \\
&K\ar[dd] \ar@{{ >}->}[rr] && A\ar@{{-|>}}[dd]^-f \\
U `^V\ar@{{ >}->}[dr] \ar@{{ >}->}[rr] && V\ar@{{ >}->}[dr] \\
&U\ar@{{ >}->}[rr] & &B
}$$
where $U$ and $V$ are subobjects of $B$, $f$ is a regular epimorphism, $K=f^{-1}(U)$ and $L=f^{-1}(V)$.
Then $$U`^ V \cong \frac{K`^L}{K[f]}.$$
\end{lemma}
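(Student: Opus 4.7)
The plan is to construct a short exact sequence
\[
\xymatrix{0 \ar[r] & K[f] \ar@{{|>-}->}[r] & K `^ L \ar@{{-|>}}[r]^-{\bar f} & U `^ V \ar[r] & 0,}
\]
from which the desired isomorphism $U `^ V \cong (K `^ L)/K[f]$ follows immediately, since in a homological category every regular epimorphism is the cokernel of its kernel.

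To build this sequence, first note that the pullback functor $f^{-1}\colon\mathrm{Sub}(B)\to\mathrm{Sub}(A)$ preserves finite limits, so in particular it preserves binary intersections of subobjects; hence $K `^ L = f^{-1}(U) `^ f^{-1}(V) = f^{-1}(U `^ V)$. Next, since $f$ is a regular epimorphism in the regular category $\Ac$ and regular epimorphisms are pullback-stable, pulling $f$ back along the monomorphism $U `^ V \rightarrowtail B$ yields a regular epimorphism $\bar f \colon K `^ L \to U `^ V$. Finally, because $0$ factors through $U `^ V \rightarrowtail B$, pulling $\bar f$ back along $0 \to U `^ V$ produces the same object as pulling $f$ back along $0 \to B$, namely $K[f]$; this exhibits $K[f]$ as $K[\bar f]$, so the displayed short exact sequence is obtained.

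There is essentially no serious obstacle here. The only point deserving care is the commutation of $f^{-1}$ with binary intersection of subobjects, but this is immediate from the preservation of finite limits by pullback functors; everything else reduces to pullback-stability of regular epimorphisms in the regular category $\Ac$ and to the fact that, in a homological setting, any regular epimorphism is normal and hence coincides with the cokernel of its kernel.
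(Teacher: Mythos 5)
Your proof is correct. Note that the paper itself does not prove this lemma: it is stated as ``a technical result proved in \cite{J4} in a more general context'', so there is no in-text argument to compare against. Your route is the natural one and fills that gap cleanly: $f^{-1}$ preserves intersections of subobjects (iterated pullbacks compute the same wide pullback), pullback-stability of regular epimorphisms gives the regular epimorphism $\bar f\colon K\wedge L\to U\wedge V$, the pasting lemma for pullbacks identifies $K[\bar f]$ with $K[f]$ (which indeed lies in $K\wedge L$ since $K[f]=f^{-1}(0)\leq f^{-1}(U)\wedge f^{-1}(V)$), and the homological hypothesis is used exactly once, to conclude that the regular epimorphism $\bar f$ is the cokernel of its kernel, yielding the short exact sequence $0\to K[f]\to K\wedge L\to U\wedge V\to 0$ and hence the stated isomorphism. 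This also shows, as a byproduct, that $K[f]$ is normal in $K\wedge L$, so the quotient in the statement is well defined.
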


We continue with the following simple observations.
\begin{lemma}\label{Carre-Zero} 
$(1)$ Let $\Ac$ be any category and $\Bc$ be a reflective subcategory of $\Ac$ with reflector $I\colon \Ac\to \Bc$ and inclusion functor $H\colon \Bc\to \Ac$. If $f$ is an epimorphism such that the unit $\eta_A$ factors through $f$
\[
\xymatrix{
A \ar@{->>}[d]_f \ar[r]^-{\eta_A} & HI(A) \\
B \ar@{.>}[ru]_e}
\]
then the factorisation $e$ is necessarily the unit $\eta_B\colon B\to HI(A)\cong HI(B)$.

$(2)$ In particular, in the case of a pointed category with kernels $\Ac$ and a normal epi-reflective subcategory $\Bc$, if $f\colon A\to B$ is a normal epimorphism such that $K[f] \leq K[\eta_A]$, it follows that induced commutative square
$$\xymatrix{
K[\eta_A] \ar@{.>}[d]_-{\hat f} \ar@{{|>-}->}[r] & A \ar@{{-|>}}[d]^-f\\
K[\eta_B] \ar@{{|>-}->}[r] & {B}
}$$
is a pullback. 
\end{lemma}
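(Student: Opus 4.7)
For part (1), my plan is to exploit the naturality of $\eta$ together with the universal property of the reflection. Naturality gives $HI(f)\circ\eta_A=\eta_B\circ f$; substituting the hypothesis $\eta_A=e\circ f$ yields $HI(f)\circ e\circ f=\eta_B\circ f$, and since $f$ is an epimorphism this forces $HI(f)\circ e=\eta_B$. Next I would show that $I(f)$ is an isomorphism, which is exactly what realises the identification $HI(A)\cong HI(B)$ under which $e$ becomes $\eta_B$. By the universal property of $\eta_B$ there is a unique $\bar e\colon I(B)\to I(A)$ with $H(\bar e)\circ\eta_B=e$. Substituting into $\eta_A=e\circ f$ and applying naturality once more gives $\eta_A=H(\bar e\circ I(f))\circ\eta_A$, so by the universal property of $\eta_A$ we get $\bar e\circ I(f)=\mathrm{id}_{I(A)}$. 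Symmetrically, the relation $HI(f)\circ H(\bar e)\circ\eta_B=\eta_B$ forces $I(f)\circ\bar e=\mathrm{id}_{I(B)}$. Hence $I(f)$ is invertible and $e=HI(f)^{-1}\circ\eta_B$ is $\eta_B$ modulo this identification.

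For part (2), I would reduce to (1). Since $f$ is a normal epimorphism, it is the cokernel of $\ker(f)\colon K[f]\to A$. The hypothesis $K[f]\leq K[\eta_A]$ means the inclusion of $K[f]$ factors through $K[\eta_A]$, so $\eta_A\circ\ker(f)=0$, and the cokernel property supplies a unique $e\colon B\to HI(A)$ with $e\circ f=\eta_A$. Part~(1) then tells us that $HI(f)$ is an isomorphism, hence in particular a monomorphism. From the naturality square $HI(f)\circ\eta_A=\eta_B\circ f$ one reads off $K[\eta_B\circ f]=K[\eta_A]$, because precomposing $\eta_A$ with a monomorphism does not change its kernel. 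Since the kernel of $\eta_B\circ f$ is precisely the pullback of $K[\eta_B]$ along $f$, the displayed square is a pullback, as required.

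No genuine obstacle arises: the entire argument is a short diagram chase using only naturality of $\eta$ and the universal property of the units. The mildly subtle point is the phrasing in (1) that $e\colon B\to HI(A)$ ``is'' the unit $\eta_B\colon B\to HI(B)$; this is resolved by the isomorphism $HI(f)\colon HI(A)\to HI(B)$ produced along the way, and it is precisely this isomorphism that is then reused in (2) to replace $K[\eta_B\circ f]$ by $K[\eta_A]$.
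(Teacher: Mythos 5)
Your proof is correct. The paper states this lemma without proof (as a ``simple observation''), and your argument --- naturality of $\eta$ plus the uniqueness clause of the universal property to show $I(f)$ is invertible in (1), then the identification $K[\eta_A]=K[\eta_B\circ f]=f^{-1}(K[\eta_B])$ via pasting of pullbacks in (2) --- is exactly the standard diagram chase one would supply; the only nitpick is that you mean \emph{post}composing $\eta_A$ with the monomorphism $HI(f)$ leaves the kernel unchanged.
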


\noindent {\bf{Convention.}} \emph{From now, until the end of this section, we shall assume that a composite adjunction \eqref{Composite-adjunction} has been fixed, which satisfies the same conditions as the ones at the beginning of Section \ref{Section-principale}.}

We need one more lemma, which is well known in concrete examples: we state it explicitly for future references.
\begin{lemma}\label{Existence-Ext-FU} If $\Ac$ has enough projectives with respect to $\Ec$, one can construct, for any $B$ in $\Ac$, a weakly universal normal extension of $B$.
\end{lemma}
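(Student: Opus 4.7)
The plan is to combine the existence of projective presentations with the universal property of the composite reflection $F_1\circ I_1\colon \Ext(\Ac)\to \CExt_{\Fc}(\Ac)$ constructed in Section \ref{Section-principale}. The key insight is that the reflection of a projective presentation in $\Ext(\Ac)$ into $\CExt_{\Fc}(\Ac)$ should automatically provide a weakly universal normal extension, since morphisms out of a projective can be lifted along any regular epimorphism.

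First, I would choose a regular epimorphism $p_0\colon P\to B$ with $P$ projective with respect to $\Ec$; this exists by the hypothesis of enough projectives. Viewing $(P,p_0)$ as an object of $\Ext(\Ac)$, I would then apply the composite reflector $F_1\circ I_1$ to obtain an extension $p\colon E\to B$ lying in $\CExt_{\Fc}(\Ac)$, together with its unit $(q,1_B)\colon (P,p_0)\to (E,p)$ in $\Ext(\Ac)$. The $B$-component of the unit is the identity because both $I_1$ and $F_1$ act only on the domain of an extension, leaving the codomain unchanged. By Theorem \ref{Caracterisation}, $\Fc$-central extensions coincide with $\Fc$-normal extensions, so $p$ is a normal extension of $B$ and $p\circ q=p_0$.

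To verify weak universality, I would take any other $\Fc$-normal extension $p'\colon E'\to B$ and use the projectivity of $P$ to lift $p_0$ through the regular epimorphism $p'$, obtaining a morphism $\tilde{u}\colon P\to E'$ with $p'\circ \tilde{u}=p_0$. The pair $(\tilde{u},1_B)$ is then a morphism $(P,p_0)\to (E',p')$ in $\Ext(\Ac)$. Since $(E',p')\in \CExt_{\Fc}(\Ac)$, the universal property of the composite reflection yields a unique morphism $(u,1_B)\colon (E,p)\to (E',p')$ factoring $(\tilde{u},1_B)$ through the unit; in particular $p'\circ u=p$, which establishes that $(E,p)$ is weakly universal.

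There is essentially no obstacle here beyond assembling pieces already available: the substantive work, namely the construction of $F_1$ and the identification of $\Fc$-normal with $\Fc$-central extensions, has been carried out in the previous section. Once that machinery is in place the existence of weakly universal normal extensions is a formal consequence of the universal property of a reflection applied to a projective presentation, and the only mild check is the purely diagrammatic observation that the reflectors fix the codomain, so that everything really lives in the slice over $B$.
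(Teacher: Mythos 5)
Your proposal is correct and follows essentially the same route as the paper: apply the composite reflector $F_1\circ I_1$ to a projective presentation $f\colon P\to B$, then for any normal extension $(E',p')$ lift $f$ through $p'$ by projectivity and invoke the universal property of the reflection to obtain the comparison morphism. The only difference is that you spell out explicitly the appeal to Theorem \ref{Caracterisation} and the fact that the reflectors fix the codomain, both of which the paper leaves implicit.
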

 
\begin{proof}
Let $B$ be an object of $\Ac$ and $f: P\rightarrow B$ a \emph{projective presentation of B}, i.e. $f \in \Ac \downarrow B$ and $P$ is projective with respect to $\Ec$. Let us show that $F_1 I_1 (f)$, which is a normal extension of $B$, is also weakly universal.
If $(E,p)$ is a normal extension of $B$, since $P$ is projective with respect to $\Ec$, there exists an arrow $\alpha:P \rightarrow E$ such that $f= p \circ \alpha$. By the universal property of $F_1 I_1 (f)$, one gets the desired factorisation of $F_1 I_1 (f)$:
$$\xymatrix{
P \ar@{{-|>}}[rr]^-f \ar@{{-|>}}[d] \ar@{.>}[rdd]^-(.7){\alpha}&&B\\
\tilde{P} =\frac{{P}/{[K[f], P]_{\Bc}}}{\overline{0}_{\frac{K[f]}{[K[f], P]_{\Bc}} }^{\Fc}} \ar@{.>}[dr]_-{\beta} \ar@{{-|>}}[urr]_-{F_1I_1(f)}&&\\
&E \ar@{{-|>}}[ruu]_-p&
}$$
\end{proof}
Thanks to this observation, one can compute  $\pi_1(B)$ for any $B$ in $\Ac$, starting from any projective presentation $f: P\rightarrow B$ of $B$. Indeed, $F_1I_1(f)$ is a weakly universal extension of $B$, as shown in the previous lemma, and
$$\pi_1(B)=\mathsf{Gal}(\tilde{P}, F_1I_1(f), 0) \cong K[F_1I_1(f)]`^K[\eta_{\tilde{P}}].$$
The formula appearing in the next theorem entirely describes $\pi_1(B)$, in terms of the closure operator associated with the composite reflection, without any reference to $F_1I_1(f)$.

\begin{theorem}\label{Formule} Let $B$ be an object of $\Ac$ and $f:P \rightarrow B$ a projective presentation of $B$. Then
$$\pi_1(B)\cong \dfrac{K[f] `^ \overline{([P,P]_{\Bc})}^{\Fc}_P }{\overline{([K[f],P]_{\Bc})}^{\Fc}_{K[f]}}.$$
\end{theorem}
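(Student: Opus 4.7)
The plan is to apply the formula \eqref{Hopf}, $\pi_1(B)\cong K[p]`^ K[\eta_E]$, to the weakly universal $\Fc$-normal extension $p=F_1I_1(f)\colon \tilde{P}\to B$ produced in Lemma \ref{Existence-Ext-FU}. The first step is to put $\tilde{P}$ in closed form. Setting $W:=[K[f],P]_{\Bc}$ and $V:=\overline{W}^{\Fc}_{K[f]}$, property~(5) of the homological closure operator (applied to $K[f]\to K[f]/W$) gives $\overline{0}^{\Fc}_{K[f]/W}\cong V/W$, and Lemma \ref{TKerF} ensures that this subobject is normal in $P/W$. Since $\tilde{P}$ is by construction the quotient of $P/W$ by $\overline{0}^{\Fc}_{K[f]/W}$, the third isomorphism theorem identifies $\tilde{P}\cong P/V$, and exhibits $F_1I_1(f)$ as the map $P/V\to B$ induced by $f$.

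Next I compute both kernels of interest. Clearly $K[F_1I_1(f)]=K[f]/V$, and $K[\eta_{\tilde P}]=\overline{0}^{\Fc}_{P/V}=\overline{V}^{\Fc}_P/V$ (again by property~(5), applied to the regular epimorphism $P\to P/V$). Since pulling back along a regular epimorphism preserves meets of normal subobjects, and both $K[f]/V$ and $\overline{V}^{\Fc}_P/V$ correspond to subobjects of $P$ containing~$V$, the intersection of the two kernels pulls back to $K[f]`^\overline{V}^{\Fc}_P$, giving
\[
\pi_1(B)\;\cong\;\frac{K[f]`^\overline{V}^{\Fc}_P}{V}.
\]

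The remaining task is to show $\overline{V}^{\Fc}_P=\overline{([P,P]_{\Bc})}^{\Fc}_P$. For the right-hand side, the identification $[P,P]_{\Bc}=\overline{0}^{\Bc}_P$ yields $P/[P,P]_{\Bc}\cong I(P)$, so $\overline{([P,P]_{\Bc})}^{\Fc}_P$ is the kernel of the composite unit $P\to I(P)\to FI(P)$, i.e.\ $\overline{0}^{\Fc}_P$. For the left-hand side, the inclusion $\Fc\subseteq\Bc$ gives $\overline{0}^{\Bc}_P\leq \overline{0}^{\Fc}_P$; hence $W\leq [P,P]_{\Bc}\leq \overline{0}^{\Fc}_P$, and property~(3) applied to $K[f]\hookrightarrow P$ together with idempotence of closure yields $V\leq \overline{W}^{\Fc}_P\leq \overline{0}^{\Fc}_P$. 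Monotonicity and idempotence of closure on both sides then force $\overline{V}^{\Fc}_P=\overline{0}^{\Fc}_P$, so the two expressions coincide.

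The principal obstacle is the bookkeeping in this last step: $V$ is defined as a closure inside $K[f]$, while the target expression in the theorem is a closure of $[P,P]_{\Bc}$ inside $P$, so one must reconcile two a priori different ``ambient'' closures by funneling both through the composite unit $\eta_P$ via the identities $\overline{0}^{\Bc}_P=[P,P]_{\Bc}$ and $\overline{0}^{\Fc}_P=K[\eta_P]$.
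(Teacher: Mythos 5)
Your argument is correct and follows essentially the same route as the paper's proof: both apply the formula $\pi_1(B)\cong K[p]\wedge K[\eta_E]$ to the weakly universal normal extension $F_1I_1(f)$ constructed in Lemma \ref{Existence-Ext-FU}, and both reduce the computation to identifying the kernel of $P\to\tilde P$ with $\overline{([K[f],P]_{\Bc})}^{\Fc}_{K[f]}$ and $K[\eta_{\tilde P}]$ with the closure of $[P,P]_{\Bc}$ via property $(5)$ of the homological closure operator. The only organisational difference is that you collapse the two-stage quotient $P\to\check P\to\tilde P$ into the single quotient $P\to P/V$ by the third isomorphism theorem (where the paper instead stacks two pullback cubes and invokes Lemma \ref{Cube-Monos}), which obliges you to add the short, correctly handled verification that $\overline{V}^{\Fc}_P=\overline{0}^{\Fc}_P$.
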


\begin{proof}
One can first remark that all the faces in the following cubes are pullbacks (we denote here $\check f=I_1(f)$ and $\tilde f = F_1(\check f)= F_1 I_1 (f)$ and write $g\colon P\to\check{P}$ and $h\colon \check{P}\to\tilde{P}$ for the canonical quotients).
$$\xymatrix@=15pt{
K[f] `^  \overline{0}^{\Fc}_{P}\ar@{.>}[dd]  \ar[dr] \ar[rr] & & \overline{0}^{\Fc}_{P} \ar[dd] \ar[dr] \\
&K[f]\ar[dd] \ar[rr] && P\ar@{{-|>}}[dd]^-g\\
K[\check f] `^\overline{0}^{\Fc}_{\check P}\ar[dr] \ar[rr] && \overline{0}^{\Fc}_{\check P}\ar[dr] \\
&K[\check f] \ar[rr] & &P/[K[f],P]_{\Bc}=\check{P}}$$
$$\xymatrix@=15pt{
 K[\check f] `^\overline{0}^{\Fc}_{\check P}\ar@{.>}[dd]  \ar[dr] \ar[rr] & &  \overline{0}^{\Fc}_{\check P}\ar[dd] \ar[dr] \\
&K[\check f] \ar[dd] \ar[rr] && \check {P}\ar@{{-|>}}[dd]^-h\\
K[\tilde f] `^\overline{0}^{\Fc}_{\tilde P}\ar[dr] \ar[rr] && \overline{0}^{\Fc}_{\tilde P}\ar[dr] \\
&K[\tilde f] \ar[rr] & &\check{P}/\overline{0}^{\Fc}_{K[\check f]} =\tilde{P}}$$
For the first cube, one sees that $K[g]=[K[f],P]_{\Bc} \leq [P,P]_{\Bc} \leq \overline{0}^{\Fc}_{P}$ which entails that its right hand face is a pullback, by the second part of Lemma \ref{Carre-Zero}. The fact that its front face is a pullback follows from Lemma \ref{homologique}, so that all the other faces are pullbacks as well.
For the second cube, one follows the same lines: one just remarks that $K[h]\leq  \overline{0}^{\Fc}_{\check P}$, since $\eta_{\check P} \circ \ker(h)=FI(\ker (\check{f}))\circ \eta_{K[\check{f}]}\circ \ker (\eta_{K[\check{f}]})=0$.\\
 The cuboid made of the two cubes above is of the same type of the one in Lemma \ref{Cube-Monos}, and one then finds:
 $$\pi_1(B)\cong \frac{K[f] `^  \overline{0}^{\Fc}_{P}}{K[h \circ g]}.$$
 We now rewrite the terms on the right side. One clearly has that $$\overline{0}^{\Fc}_{P}=\overline{(\overline{0}^{\Bc}_{P})}^{\Fc}_{P}= \overline{([P,P]_{\Bc})}^{\Fc}_P.$$ Furthermore, by looking at the diagram
 $$
 \xymatrix{
 \hat{g}^{-1}(\overline{0}^{\Fc}_{K[\check f]}) \ar[r] \ar[d] & K[f] \ar[r]^-{\ker(f)} \ar@{{-|>}}[d]_-{\hat g}& P\ar@{{-|>}}[d]^-g\\
\overline{0}_{K[\check f]}^{\Fc} \ar@{{|>-}->}[r]_-{{\ker}({\eta_{K[\check f]})}} \ar[d]& K[\check f] \ar@{{|>-}->}[r]& \check P\ar@{{-|>}}[d]^-h\\
 0\ar[rr]& & \tilde P 
 }$$
 in which all rectangles are pullbacks, we see that there is an isomorphism between the  domains $K[h\circ g]$ and $\hat{g}^{-1}(\overline{0}_{K[\check f]}^{\Fc})$ of the normal monomorphisms $\ker(h\circ g)$ and ${\hat g}^{-1}({\ker}({\eta_{K[\check f]})})$, respectively. Since $\hat g$ is a regular epimorphism, and the closure operator corresponding to the regular epi-reflection $F\circ I \colon \Ac \rightarrow \Fc$ is \emph{homological} (so that axiom $(5)$ holds), one has the following equalities:
 $$\begin{array}{rcl}
\hat{g}^{-1}(\overline{0}^{\Fc}_{K[\check f]}) &=& \overline{(\hat{g}^{-1}(0))}^{\Fc}_{K[f]}\\
 &=& \overline{(K[\hat g])}^{\Fc}_{K[f]}\\
  &\stackrel{(*)}{=}& \overline{(K [g])}^{\Fc}_{K[f]}\\
  &=& \overline{([K[f],P]_{\Bc})}^{\Fc}_{K[f]}.
  \end{array}$$
  $(*)$  Here, by abuse of notation, $K[g]$ denotes the (domain of the) kernel  \\
  $\xymatrix{K[g] \ar@{{|>-}->}[r] & K[f]}$ of $\hat g$. More precisely, the kernel of $h\circ g$ is the normal monomorphism $$\xymatrix{\overline{([K[f],P]_{\Bc})}^{\Fc}_{K[f]} \ar[rr]^-{\ker(f)\circ \overline{\ker(\hat g)}} && P}.$$
  \end{proof}



\section{Examples}

\noindent {\bf Groups with coefficients in torsion-free abelian groups.}

\noindent We consider, as a first example, the adjunction
$$\xymatrix@=30pt{
\Ab_{t.f.} \ar@/_/[r]_-U   \ar@{}[r]|-{\perp} & \Ab \ar@/_/[r]_-H \ar@{}[r]|-{\perp} \ar@/ _/[l]_-F & \Gp \ar@/ _/[l]_-{ab}}$$
where $\Gp$ is the category of groups, $\Ab$ the category of abelian groups and $\Ab_{t.f.}$ the category of torsion-free abelian groups (this is the torsion-free part of the classical torsion theory $(\Ab_{t.},\Ab_{t.f.})$ where $\Ab_{t.}$ is the category of torsion abelian groups). This composite adjunction is an instance of $(\ref{Composite-adjunction})$, since the reflector $F  \colon \Ab \rightarrow \Ab_{t.f.}$ is an additive functor, thus a protoadditive functor. Note that the kernel of the $A$-component of the unit $\eta$ of this adjunction at an abelian group $A$ is given by 
$$K[\eta_A]=\overline{0}^{\Ab_{t.f.}}_A=\{a \in A \text{  $|$ }  \exists n \in \mathbb{N}_0: a^n=1\}.$$ 
Now, when $K$ is a normal subgroup of a group $A$, with quotient map $q_K\colon A\to A/K$, such that $K \geq \overline{0}^{\Ab}_{A}=[A,A]_{\Ab} =[A,A]$ (here the commutator is the group-theoretic one, thus the quotient group $A/K$ is abelian), one has that
 $$\begin{array}{rcl}
 \overline{K}^{\Ab_{t.f.}}_{A} &=&q_K^{-1}(\overline{0}^{\Ab_{t.f.}}_{A/K})\\
 &=& \{a \in A \text{  $|$ }  \exists n \in \mathbb{N}_0: (Ka)^n=K\}\\
  &= &\{ a \in A\text{ $|$  }\exists n \in \mathbb{N}_0: a^n \in K\}. \end{array}$$
Consider then any free presentation $$\xymatrix{
0\ar[r]&K\ar@{{|>-}->}[r]  & P\ar@{{-|>}}[r] &B\ar[r]&0
}$$
of a group $B$.
In order to compute the generalised Hopf Formula in Theorem \ref{Formule}, we first observe that $\overline{0}^{\Ab}_{P}=[P,P]_{\Ab} =[P,P]$ and $\overline{0}^{\Ab}_{K}=[K,K] \leq [K,P]$, so that the description of the closure with respect to $\Ab_{t.f.} $ given above applies to $[P,P]$ and to $[K,P]$. 
Consequently, the fundamental group can be computed as follows:
$$\begin{array}{rcl}
\pi_1(B) &\stackrel{\ref{Formule}}{\cong}&\dfrac{K`^ \overline{([P,P])}^{\mathsf{Ab}_{t.f.}}_P }{\overline{([K,P])}^{\Ab_{t.f.}}_{K}}\\
&\cong &\dfrac{K `^\{ p \in P \text{  $|$  } \exists n \in \mathbb{N}_0: p^n \in [P,P]\}}{\{ p \in K \text{  $|$  } \exists n \in \mathbb{N}_0: p^n \in [K,P]\}}\\
&=& \dfrac{\{ p \in K  \text{  $|$  } \exists n \in \mathbb{N}_0: p^n \in [P,P]\}}{\{ p \in K \text{  $|$  } \exists n \in \mathbb{N}_0: p^n \in [K,P]\}}. \end{array}$$
\vspace{3mm} 

\noindent {\bf Rings with coefficients in reduced commutative rings.} \\
Let $\mathsf{Rng}$ be the semi-abelian category of (not necessarily unitary) rings and $\mathsf{CRng}$ its subvariety of (not necessarily unitary) commutative rings: we denote the corresponding reflection
$$\xymatrix@=30pt{
  \mathsf{CRng} \ar@/_/[r]_-H \ar@{}[r]|-{\perp}  & \mathsf{Rng.} \ar@/ _/[l]_-{I}}$$
Let then $\mathsf{RedCRng}$ be the category of reduced commutative rings, which is the
full replete subcategory of $\mathsf{CRng}$ whose objects have no (non-zero) nilpotent element. In other words, the rings in $\mathsf{RedCRng}$ satisfy all implications of the form $ x^n=0 \Rightarrow x=0$ (with $n\geq 1)$. 
This yields a regular epi-reflective subcategory 
$$\xymatrix@=30pt{
\mathsf{RedCRng} \ar@/_/[r]_-U   \ar@{}[r]|-{\perp} & \mathsf{CRng} \ar@/ _/[l]_-F
}$$
 where $\mathsf{RedCRng}$ is the torsion-free part of a hereditary torsion theory  $$(\mathsf{CRng_{Nil}}, \mathsf{RedCRng})$$ in $\mathsf{CRng}$, whose torsion part is the subcategory $\mathsf{CRng_{Nil}}$ of nilpotent commutative rings (see \cite{CDT}, for instance). The homological closure operator associated with this last reflection can be described explicitely, and it actually gives the well known notion of \emph{radical} of an ideal. Indeed, for any ideal $I$ of a commutative ring $A$, its closure in $A$ is its radical in $A$, written $\sqrt{I}_{(A)}$:
$$\begin{array}{rcl}
  \overline{I}^{\mathsf{RedCRng}}_{A} &=& q_I^{-1}(\overline{0}_{A/I}^{\mathsf{RedCRng}})\\
 &=& \{a \in A \text{  $|$ }  \exists n \in \mathbb{N}_0: (I+a)^n=I\}\\
  &= &\{ a \in A\text{ $|$  }\exists n \in \mathbb{N}_0: a^n \in I\}\\
  &=& \sqrt{I}_{(A)}. 
  \end{array}$$
One can then consider the following composite adjunction
$$\xymatrix@=30pt{
\mathsf{RedCRng} \ar@/_/[r]_-U   \ar@{}[r]|-{\perp} &  \mathsf{CRng} \ar@/_/[r]_-H \ar@{}[r]|-{\perp} \ar@/ _/[l]_-F & \mathsf{Rng} \ar@/ _/[l]_-{I}}$$
where the reflector $F \colon \mathsf{CRng} \rightarrow \mathsf{RedCRng}$ is indeed protoadditive, as one can easily see by using the fact that the torsion theory $(\mathsf{CRng_{Nil}}, \mathsf{RedCRng})$ is hereditary. This adjunction is then another example of our composite adjunction \eqref{Composite-adjunction}. Given a free presentation of a ring $B$
$$\xymatrix{
0\ar[r]&K\ar@{{|>-}->}[r]  & P\ar@{{-|>}}[r] &B\ar[r]&0
}$$
the generalised Hopf formula for $\pi_1(B)$ here becomes:
$$\pi_1(B)\cong \dfrac{K `^ \sqrt{[P,P]_{\mathsf{CRng}}}_{(P)}} {\sqrt{[K,P]_{\mathsf{CRng}}}_{(K)}}$$
where $[P,P]_{\mathsf{CRng}}=( \{ pp'-p'p \text{ $|$ } p,p' \in P\} )$ is the ideal of $P$ generated by all the elements of the form $pp'-p'p$ for  $p,p' \in P$ and, similarly, $$[K,P]_{\mathsf{CRng}}=( \{ pk-kp \text{ $|$ } k\in K,\text{  } p \in P\} )$$ is the ideal of $P$ generated by all elements of the form $pk-kp$ for $k \in K$, $p \in P$.
\vspace{3mm} 

\noindent {\bf Compact groups with coefficients in abelian profinite groups.}\\
We now consider the following composite adjunction:
$$\xymatrix@=30pt{
\Ab(\mathsf{Prof}) \ar@/_/[r]_-U   \ar@{}[r]|-{\perp} & \Ab(\mathsf{HComp}) \ar@/_/[r]_-H \ar@{}[r]|-{\perp} \ar@/ _/[l]_-F & \Gp(\mathsf{HComp}) \ar@/ _/[l]_-{\mathsf{ab}}.}$$
Here $\mathsf{Grp} (\mathsf{HComp})$ is the semi-abelian category (see \cite{BC}) of compact (Hausdorff) groups, $\mathsf{Ab} (\mathsf{HComp})$ is its Birkhoff subcategory of abelian compact groups, and $\mathsf{Ab} (\mathsf{Prof})$ its Birkhoff subcategory of profinite abelian groups.  As usual, $U$ and $H$ are full inclusions, and we write $\mathsf{ab}$ for the left adjoint of $H$, which sends a compact group $G$ to the quotient $G/\overline{[G,G]}^{\mathsf{top}}$ of $G$ by the normal subgroup ${\overline{[G,G]}}^{\mathsf{top}}$, the \emph{topological closure} in $G$ of the derived subgroup $[G, G]$ of $G$. The left adjoint of $U$, here denoted by $F$, sends an abelian compact group $A$ to the quotient $A/A_0$ of $A$ by the connected component $A_0$ of the neutral element $0$ of $A$. Since the category $ \Ab(\mathsf{HComp})$ is abelian, the reflector $F \colon  \Ab(\mathsf{HComp}) \rightarrow \Ab(\mathsf{Prof})$ is necessarily (proto)additive, so that the composite adjunction above is another special instance of the adjunction $(\ref{Composite-adjunction})$. Note that the category $\Gp(\mathsf{HComp})$ has enough regular projectives, since it is monadic over the category of sets \cite{Manes}. Hence, Theorem \ref{Formule} gives us a characterisation of the fundamental group with respect to this adjunction. As in the previous examples, we can give an explicit description of the homological closure operator in this situation. In order to do this, let us consider also the Birkhoff subcategory $\Gp(\mathsf{Prof})$ of   $\mathsf{Grp} (\mathsf{HComp})$   of profinite groups. The reflection of a compact group $G$ in this subcategory is given by the quotient $G/G_0$, with $G_0$ the connected component of the neutral element of $G$. Note that the internal product $K\cdot L$ of two closed normal subgroups of a compact group $G$ (normal subobjects in the category $\mathsf{Grp} (\mathsf{HComp})$) is necessarily closed, so that it is the supremum $K\vee L$ in the lattice of (normal) subobjects of $A$. Using that $\Ab(\mathsf{Prof}) \leq \Gp(\mathsf{Prof})$, as well as Lemma \ref{Fermeture} (2), we find, for any normal subobject $K$ of $G$ such that  $K\geq \overline{[G,G]}^{\mathsf{top}}$---which implies that $G/K\cdot G_0\in\Ab(\mathsf{Prof})$---that
\[
 \overline{(K)}^{\Ab(\mathsf{Prof})}_G\leq  \overline{(K\cdot G_0)}^{\Ab(\mathsf{Prof})}_G= K\cdot G_0= \overline{(K)}^{\Gp(\mathsf{Prof})}_G\leq  \overline{(K)}^{\Ab(\mathsf{Prof})}_G
\]
and we see that the inequalities are, in fact, equalities. Hence, for any compact group $B$, and any projective presentation $$\xymatrix{ 0 \ar[r] & K \ar[r] & P \ar[r] & B \ar[r] & 0.}$$ the characterisation of the fundamental group of $B$ from Theorem \ref{Formule} becomes
$$ \pi_1 (B) {\cong} \frac{K \wedge (\overline{[P,P]}^{\mathsf{top}} \cdot P_0 )}{\overline{[K , P]}^{\mathsf{top}} \cdot K_0 },$$
 It is not difficult to extend this result to the context of compact semi-abelian algebras by applying the methods of Borceux and Clementino in \cite{BC}.

\vspace{3mm} 

\noindent {\bf Simplicial loops with coefficients in groups.} \\
For this last example, we consider a semi-abelian category $\Ac$ with Birkhoff subcategory $\Bc$ and write, as usual, $H$ for the inclusion functor and $I$ for its left adjoint. We denote by $\Sc (\Ac)$ and $\Sc (\Bc)$ the categories of simplicial objects in $\Ac$ and $\Bc$, respectively. Now consider the composite adjunction
$$\xymatrix@=30pt{
\Bc \ar@/_/[r]_-{D}   \ar@{}[r]|-{\perp} & \Sc(\Bc) \ar@/_/[r]_-{\Sc(H)} \ar@{}[r]|-{\perp} \ar@/ _/[l]_-{\pi_0} & \Sc(\Ac) \ar@/ _/[l]_-{\mathsf{\Sc}(I)}.}$$
Here the functors $\Sc(H)$ and $\Sc(I)$ are induced by $H$ and $I$, respectively, $\pi_0$ is the ``connected components" functor and $D$ its right adjoint, which maps an object of $\Bc$ to the associated discrete simplicial object. $\Sc(\Bc)$ is, of course, a Birkhoff subcategory of $\Sc(\Ac)$, and $\Bc$ a Birkhoff subcategory of $\Sc(\Bc)$.  Furthermore, one can prove that $\pi_0$ is a protoadditive functor by considering with any split short exact sequence of simplicial objects in $\Ac$ (the final part of) the induced long exact sequence (see Corollary $5.7$  in \cite{EVD}) and by taking into account that, in a pointed protomodular category, a morphism is a monomorphism if its kernel is zero. Notice also that $\Bc$ is closed under extensions in $\Sc(\Bc)$ by the Short Five Lemma; one can then check that the Corollary in \cite{JT} applies here, so that $\Bc$ is a torsion-free subcategory of $\Sc(\Bc)$. In this example the torsion subcategory consists of the simplicial objects that are connected.  By using similar arguments as in the previous example, we can obtain characterisations of the fundamental group for different choices of $\Ac$ and $\Bc$. 

For instance, let $\Ac$ be the variety $\Loop$ of loops: recall that its algebraic theory has three binary operations $\cdot, \backslash, /$, called multiplication, left division and right division, respectively, and a unique constant $1$ satisfying the identities $$y= x \cdot (x\backslash y), \quad y= x \backslash (x \cdot y), \quad x=(x /y) \cdot y, \quad x=(x \cdot y) / y, \quad x\cdot 1 =x= 1 \cdot x.$$ This variety is semi-abelian, as shown in \cite{BC}, and it contains the variety $\Bc= \Gp$ of groups as a subvariety (since a loop is a group if and only if the multiplication is associative). We know from \cite{EVDAU} that, for any surjective homomorphism of loops $f\colon A\to B$, with kernel $K[f]$, the corresponding relative commutator $[K[f],A]_{\Gp}$ is the ``associator" $[K[f],A,A]$. Thanks to this result and to Theorem \ref{Formule} we find that, for any projective presentation   
$$\xymatrix{ 0 \ar[r] & K \ar[r] & P \ar[r] & B \ar[r] & 0}$$
of a simplicial loop $B$, there is an isomorphism
$$ \pi_1 (B) {\cong} \frac{K \wedge ([P,P,P] \vee P_0  )}{[K , P,P] \vee K_0 },$$
with $P_0$ and $K_0$ the connected simplicial objects determined by the connected components of $0$ in $P$ and in $K$, respectively, and the ``associator'' of simplicial loops is defined degreewise.

Note that the category $\Sc(\Loop)$ has enough projectives as a consequence of the following two facts. On the one hand, the category $\Sc(\Set)$ of simplicial sets has enough projectives (as any category of presheaves---see, for instance, Exercise IV.$15$ in \cite{MM}); on the other hand, for any monadic functor $F\colon \Xc\to\Yc$ that preserves regular epimorphisms, one has that $\Xc$ has enough projectives as soon as so has $\Yc$ (as explained in the proof of Proposition $3.2$ in \cite{EG}), and we can apply this result, in particular, in the case of the forgetful functor $\Sc(\Loop)\to \Sc(\Set)$.

\end{document}